\newtheorem{theorem}{Theorem}[section]
\newtheorem{lemma}[theorem]{Lemma}
\newtheorem{corollary}[theorem]{Corollary}
\theoremstyle{definition}
\theoremstyle{remark}
\numberwithin{equation}{section}
\def\bfa{{\mathbf a}}
\def\bfd{{\mathbf d}} 
\def\bfF{{\mathbf F}}
\def\bft{{\mathbf t}}
\def\bfu{{\mathbf u}}
\def\bfx{{\mathbf x}}
\def\bfy{{\mathbf y}}
\def\bfz{{\mathbf z}}
\def\calA{{\mathcal A}}  
\def\calB{{\mathcal B}} 
\def\calC{{\mathcal C}}
\def\calO{{\mathcal O}}
\def\calT{{\mathcal T}}
\def\calU{{\mathcal U}}
\def\A{{\mathbb A}}
\def\AA{{\mathbf A}}
\def\C{{\mathbb C}}
\def\G{{\mathbb G}}
\def\N{{\mathbb N}}
\def\R{{\mathbb R}}
\def\Z{{\mathbb Z}}\def\Q{{\mathbb Q}}
\def\gra{{\mathfrak a}}
\def\grm{{\mathfrak m}}\def\grM{{\mathfrak M}}
\def\grn{{\mathfrak n}}\def\grS{{\mathfrak S}}
\def\grp{{\mathfrak p}}
\def\alp{{\alpha}} \def\bfalp{{\boldsymbol \alpha}}
\def\bet{{\beta}}  \def\bfbet{{\boldsymbol \beta}}
\def\gam{{\gamma}}
\def\bfgam{{\boldsymbol \gamma}}
\def\Gam{{\Gamma}}
\def\del{{\delta}} \def\Del{{\Delta}}
\def\iot{{\iota}}
\def\zet{{\zeta}}  
\def\tet{{\theta}}  
\def\kap{{\kappa}}
\def\lam{{\lambda}}  
\def\bflam{{\boldsymbol \lambda}}
\def\sig{{\sigma}}
\def\ome{{\omega}} 
\def\d{{\partial}}
\def\eps{\varepsilon}
\def\d{{\,{\rm d}}}
\def\ord{{\rm ord}}
\def\Tr{{\rm Tr}}
\def\Nm{{\rm Nm}}
\def\det{{\rm det}}
\def\dim{{\rm dim}}
\def\div{{\rm div}}
\def\Br{{\rm Br}}
\def\sm{{\rm sm}}
\newcommand{\xnu}{x^{(\nu)}}
\newcommand{\bfxnu}{\bfx^{(\nu)}}
\newcommand{\ynu}{y^{(\nu)}}
\newcommand{\bfynu}{\bfy^{(\nu)}}
\def\Xbar{{\bar{X}}}
\def\Ubar{{\bar{U}}}
\def\Gal{{\rm Gal}}
\def\Pic{{\rm Pic}}
\def\Div{{\rm Div}}
\begin{document}

\title{Norms as products of linear polynomials}
\author[Damaris Schindler and Alexei Skorobogatov]{Damaris Schindler 
and Alexei Skorobogatov}
\address{Hausdorff Center for Mathematics, Endenicher Allee 62, 53115 Bonn, Germany}
\email{damaris.schindler@hcm.uni-bonn.de}
\address{Department of Mathematics, South Kensington Campus,
Imperial College London, SW7 2BZ, United Kingdom}
\email{a.skorobogatov@imperial.ac.uk}

\subjclass[2010]{14G05 (11D57, 11G35, 11P55)}
\keywords{weak approximation, Hardy--Littlewood method, descent}

\begin{abstract}
Let $F$ be a number field, and let $F\subset K$ be a field extension
of degree $n$. Suppose that we are given $2r$ sufficiently general 
linear polynomials in $r$ variables over $F$.
Let $X$ be the variety over $F$ such that the $F$-points of $X$
bijectively correspond to the representations of 
the product of these polynomials by a norm from $K$ to $F$.
Combining the circle method with descent we
prove that the Brauer--Manin obstruction is the only obstruction 
to the Hasse principle and weak approximation on 
any smooth and projective model of $X$.
\end{abstract}
\maketitle

\section{Introduction} 

Let $K/F$ be an extension of number fields of degree $n\geq 2$. We fix 
a basis $\xi_1,\ldots, \xi_n$ of $K$ as an $F$-vector space, and write
$N(\bfz)$ for the norm form $N_{K/F}(z_1\xi_1+\ldots +z_n \xi_n)$,
where $\bfz = (z_1,\ldots, z_n)$.
Let $L_1(\bft),\ldots,L_{2r}(\bft)$, where
$\bft=(t_1,\ldots, t_r)$, be non-zero linear functions 
with coefficients in $F$, not necessarily homogeneous. 
Consider the Diophantine equation
\begin{equation}\label{eqn1.0}
\prod_{i=1}^{2r}L_i^{e_i}(\bft)= c N(\bfz),
\end{equation}
where $c \in F^*$, and $e_1,\ldots, e_{2r}$ are positive
integers. It is known that already for $r=1$ and $e_1=e_2=1$
weak approximation for (\ref{eqn1.0}) can fail.
Thus one is naturally led to investigate whether the Brauer--Manin obstruction
controls the Hasse principle and weak approximation
on smooth and projective varieties birationally equivalent to the affine
hypersurface (\ref{eqn1.0}).
For $r=1$ this was proved for $F=\Q$ in \cite{HBSko} and \cite{CHS} 
(see also \cite{CT}),
and recently generalised to an arbitrary number field $F$ in \cite{SJ11}.
In this paper, 
which is independent of \cite{SJ11}, we combine 
the circle method of Hardy and Littlewood with the method of descent
of Colliot-Th\'el\`ene and Sansuc to extend these results to 
$r\geq 1$ and any number field $F$.

For the circle method part we require the functions $L_i$ 
to be sufficiently general. More precisely, we assume the following condition.

\smallskip

Condition I. {\em Let ${\mathcal L}$ be the set of linear functions
$\{1,L_1,\ldots, L_{2r}\}$. For each $L\in{\mathcal L}$
there exist subsets ${\mathcal A}\subset {\mathcal L}$ and 
${\mathcal B}\subset {\mathcal L}$ of linearly independent functions
such that $|{\mathcal A}|=|{\mathcal B}|=r+1$ and 
${\mathcal A}\cap{\mathcal B}=\{L\}$.}

\smallskip


Our main result is the following theorem. 

\begin{theorem}\label{thm1.0}
Let $F$ be a number field. If $L_1,\ldots, L_{2r}$
satisfy Condition I, then the Brauer--Manin obstruction is the only 
obstruction to the Hasse principle and weak approximation on any smooth 
and proper model of the affine hypersurface $X$ given by 
$(\ref{eqn1.0})$. 
When the set of $F$-points of $X$ is not empty, it is Zariski dense in $X$.
\end{theorem}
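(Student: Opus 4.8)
The plan is to combine the Hardy--Littlewood circle method with the descent method of Colliot-Th\'el\`ene and Sansuc, following the strategy pioneered in \cite{HBSko} and \cite{CHS} for $r=1$. The first step is to reduce the problem to an auxiliary variety of a more tractable shape. Rather than working directly with $(\ref{eqn1.0})$, one introduces a universal torsor (or, more precisely, a family of torsors under a quasi-trivial torus) over a smooth and proper model of $X$; the relevant torsors are the ``descent varieties'' associated to the natural $K^*/F^*$-torsor coming from the norm equation together with the toric structure encoded by the linear factors $L_1,\dots,L_{2r}$. By the descent formalism of Colliot-Th\'el\`ene--Sansuc, if every such torsor satisfies the Hasse principle and weak approximation, then the Brauer--Manin obstruction is the only obstruction on $X$, and moreover $X(F)$ is Zariski dense whenever it is nonempty. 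Thus the theorem reduces to establishing the Hasse principle and weak approximation \emph{unconditionally} on the torsors.

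The second and main step is to prove the Hasse principle and weak approximation for these torsors via the circle method. After clearing denominators and choosing suitable integral models, a local solution together with weak-approximation data gives a system of congruence and archimedean constraints, and one must count integral solutions $(\bft,\bfz,\dots)$ of the torsor equations lying in an expanding box and satisfying these constraints. The equations take the shape of several simultaneous norm equations in which the variables $\bfz$ are ``twisted'' by the linear forms $L_i(\bft)$; here Condition~I enters decisively. For each $L\in\mathcal L$ the sets $\mathcal A,\mathcal B$ of $r+1$ linearly independent forms meeting only in $L$ allow one, after a change of variables, to exhibit enough independent linear directions so that the number of variables comfortably exceeds the number and degree of the equations. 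This is precisely the arithmetic input needed to run a multidimensional Hardy--Littlewood analysis: one sets up the exponential sum, separates the contribution of the major arcs, which produces a product of local densities (a singular series times a singular integral, both positive by the assumed local solubility), and bounds the minor arcs. Controlling the error term will require a Weyl-type or van der Corput estimate for the relevant exponential sums over the number field $F$, handled through a suitable choice of fundamental domain and the standard reduction to sums over $\Z$.

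The main obstacle, and the technical heart of the argument, is the minor-arc estimate for these mixed norm-form-times-linear-form exponential sums over a general number field. One has to show that the minor-arc contribution is of strictly smaller order than the main term, and the quality of the available bounds depends on having sufficiently many variables relative to the structure of the equations; verifying that Condition~I guarantees this in all cases of $L\in\mathcal L$ (including $L=1$) is where care is needed, since the forms $L_i$ are only assumed ``sufficiently general'' in the combinatorial sense of Condition~I, not in a generic-position sense. A secondary obstacle is bookkeeping: passing from $F=\Q$ (as in \cite{CHS}) to an arbitrary number field forces one to work with ideles, non-trivial class groups, and units, so the singular series and singular integral must be reinterpreted as products of genuinely local densities, and the descent computation of the Picard group and Brauer group of a smooth proper model of $X$ must be carried out over $F$.

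Finally, once the torsors are shown to satisfy the Hasse principle and weak approximation, I would assemble the pieces: the descent theory converts this into the statement that $\Br(X)$ (equivalently, the Brauer group of a smooth proper model) captures all obstructions, and the fibration/torsor argument that produces rational points on the torsors simultaneously yields that $X(F)$, when nonempty, is dense in $X$ in the Zariski topology, completing the proof of Theorem~\ref{thm1.0}.
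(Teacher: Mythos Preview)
Your overall strategy matches the paper's: descent reduces Theorem~\ref{thm1.0} to the Hasse principle and weak approximation on explicit torsors, and these are handled by the circle method over $F$. Two points, however, are sharper in the paper than in your sketch, and without them the argument does not close.

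First, the shape of the torsors. The relevant torsors are not ``norm-form-times-linear-form'' varieties in mixed variables $(\bft,\bfz)$, nor are they under a quasi-trivial torus. Over the open set where all $L_i\neq 0$ the vertical torsor splits as $E\times V$, where $E$ is a principal homogeneous space of the norm-one torus (handled by Sansuc's theorem, \emph{not} the circle method) and $V$ is given by $L_i(\bft)=\varrho_i N(\bfy_i)\neq 0$ for $1\le i\le 2r$. Because $1,L_1,\dots,L_{2r}$ span an $(r+1)$-dimensional space, the $\bft$-variables are eliminated and $V$ becomes an open subset of the affine variety $Y$ of~(\ref{eqn1.1}): $r$ linear relations among $2r$ independent norm forms and a constant. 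Condition~I on the $L_i$ translates into Condition~II on the coefficient matrix $A$ --- deleting any column leaves the remaining $2r$ columns partitionable into two invertible $r\times r$ blocks. The circle method is applied to $Y$, not to anything involving the $L_i(\bft)$ directly.

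Second, the minor-arc estimate is not a generic ``enough variables'' Weyl bound. After homogenizing with one extra norm form, the generating sum factors as $S(\bfalp)=\prod_{j=1}^{s} S_j(\bflam_j)$ with $s=2r+1$. On the minor arcs a Birch-type Weyl inequality makes one factor, say $S_s$, small: $|S_s|\ll P^{mn-\Delta}$. The remaining $2r$ factors are split by Cauchy--Schwarz into two groups of $r$; Condition~II is precisely what makes each group correspond to a nonsingular linear change of $\bfalp$-variables, so that each $\int|S_{j_1}\cdots S_{j_r}|^2\,\d\bfalp$ reduces to a product of $r$ independent integrals $\int_{[0,1]^m}|S_j(\bfbet)|^2\,\d\bfbet$. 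These are $\ll P^{mn+\eps}$ by the divisor-type bound of Birch--Davenport--Lewis counting solutions of $N_{K/\Q}(z_1)=N_{K/\Q}(z_2)$ in a box. This $L^2$-plus-Cauchy--Schwarz mechanism, enabled by Condition~II, is the heart of the minor-arc treatment; the vague appeal to ``enough independent linear directions'' in your sketch does not capture it.
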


The calculation of the Brauer group of a smooth and proper model of $X$ is
a non-trivial open problem, see \cite{CHS}, \cite{W1}, \cite{W2} for
some results in this direction. The following corollary to Theorem \ref{thm1.0}
is based on the simplest case when the Brauer group is trivial, 
pointed out in \cite[Cor. 2.7]{CHS}.

\begin{corollary}\label{cor1}
In the assumptions of Theorem \ref{thm1.0} assume further that either

{\rm (i)} $(e_1,\ldots,e_{2r})=1$ and $K$ does not contain a cyclic extension of $F$
of degree $d$ such that $1<d<n$, or

{\rm (ii)} $n$ is prime and $K$ is not a Galois extension of $F$.

\noindent Let $X_\sm$ be the smooth locus of $X$. Then 
the image of the natural map
$$ X_\sm(F)\rightarrow \prod_\nu X_\sm(F_\nu),$$
where $F_\nu$ ranges over all completions of $F$,
is dense in the product of local topologies. 
\end{corollary}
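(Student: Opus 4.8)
The plan is to deduce Corollary \ref{cor1} from Theorem \ref{thm1.0} by showing that, under hypothesis (i) or (ii), the Brauer group of a smooth and proper model $Y$ of $X$ consists only of constant classes, so that the Brauer--Manin set equals the full adelic space and weak approximation holds unconditionally for $Y$. First I would invoke the computation of \cite[Cor. 2.7]{CHS} (as signalled in the text): for the variety attached to a norm equation, the algebraic part of the Brauer group $\Br(Y)/\Br(F)$ is controlled by the arithmetic of the extension $K/F$ together with the multiplicities $e_i$, and it vanishes precisely when $K/F$ admits no intermediate cyclic subextension of degree $d$ with $1<d<n$ (which, combined with a primitivity condition on the $e_i$, kills all the potential classes coming from the norm torus), while the transcendental part is absent on such models. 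Thus under (i) one gets $\Br(Y)=\Br(F)$; under (ii), if $n$ is prime any proper subextension of $K/F$ has degree $1$ or $n$, so the only way to get an intermediate cyclic extension of degree $1<d<n$ is $d$ itself being $n$, i.e.\ $K/F$ Galois cyclic, which is excluded, so again $\Br(Y)=\Br(F)$; the hypothesis on the $e_i$ in (i) is not needed in case (ii) because primality already forces the relevant cohomology to vanish.

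Next I would translate the vanishing of the Brauer group into the statement of the corollary. Since $\Br(Y) = \Br(F)$, the Brauer--Manin pairing is trivial, so $Y(\mathbf{A}_F)^{\Br} = Y(\mathbf{A}_F)$, and Theorem \ref{thm1.0} then says that $Y(F)$ is dense in $\prod_\nu Y(F_\nu)$ — i.e.\ $Y$ satisfies the Hasse principle and weak approximation in the ordinary sense. The remaining point is to pass from the smooth proper model $Y$ to the smooth locus $X_\sm$ of the affine hypersurface $X$. Here I would use that $X_\sm$ is an open dense subset of $Y$ (after choosing $Y$ to contain $X_\sm$ as an open subscheme, which is possible since $X_\sm$ is smooth), and that for a smooth variety weak approximation is a birational invariant in the strong sense that if it holds for one smooth model it holds for any smooth model: more concretely, $X_\sm(F_\nu)$ is open in $Y(F_\nu)$ for each $\nu$, and any point of $\prod_\nu X_\sm(F_\nu)$ can be approximated by an $F$-point of $Y$ lying in this open set, which is then automatically in $X_\sm(F)$. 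The final clause of Theorem \ref{thm1.0} — that $X(F)$, when non-empty, is Zariski dense — guarantees in particular that $X_\sm(F)$ is non-empty whenever the adelic product is, so the map in question has dense image.

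The main obstacle I expect is not in the deduction itself, which is essentially formal once the Brauer group vanishes, but in correctly citing and applying the Brauer group computation: one must be careful that \cite[Cor. 2.7]{CHS} is stated for the case $r=1$ and that the product $\prod L_i^{e_i}$ for general $r$ has the same Brauer group behaviour, which should follow because the algebraic Brauer group of these norm-type varieties depends only on the splitting behaviour of $K/F$ (through $H^1$ of the Galois group with values in the character lattice of the relevant torus) and on the divisor class group data encoded by the $e_i$, both of which are governed by exactly the two numerical conditions in (i). One should also check that the smooth proper model can indeed be taken to dominate $X_\sm$ and that no subtlety arises at the finitely many archimedean or bad places when passing between the affine and projective pictures; this is routine but needs the openness of $X_\sm(F_\nu) \subset Y(F_\nu)$ in the analytic topology, which holds because $X_\sm \hookrightarrow Y$ is an open immersion.
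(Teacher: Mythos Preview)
Your overall strategy is exactly the paper's: show that under (i) or (ii) one has $\Br(X^c)=\Br_0(X^c)$ for a smooth proper model $X^c$, so the Brauer--Manin condition is vacuous and Theorem~\ref{thm1.0} gives weak approximation, then pass to the open set $X_\sm$. The paper's own proof is just two lines of citation to \cite{CHS}.

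The one place your argument is shakier than the paper's is case~(ii). You try to subsume (ii) under the logic of (i) by observing that for $n$ prime there is no subfield of degree $1<d<n$, and then assert that ``primality already forces the relevant cohomology to vanish'' so the coprimality condition on the $e_i$ can be dropped. But the second clause of (i) being satisfied does not by itself give $\Br=\Br_0$; the condition $(e_1,\ldots,e_{2r})=1$ in (i) is doing real work, and you have not explained what replaces it when $n$ is prime and $K/F$ is non-Galois. The paper does not try this reduction: it invokes a \emph{separate} argument, the worked example on \cite[p.~77--78]{CHS}, which treats precisely the case $n$ prime, $K/F$ not Galois, and shows the relevant $H^1$ vanishes for reasons specific to the Galois-module structure of $\widehat T$ in that situation (essentially because the permutation module $\Z[K/F]$ has no cyclic quotient when the stabiliser is a non-normal subgroup of prime index). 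So your sketch is correct in spirit but you should cite the two inputs from \cite{CHS} separately rather than trying to fold (ii) into (i).

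The passage from $X^c$ back to $X_\sm$ is fine as you describe it.
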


Our descent argument is summarised in Theorem \ref{new}
which closely follows \cite{CHS}.
We construct a smooth partial compactification $X'$ of
$X_\sm$ such that $X'$ has no non-constant
invertible regular 
functions and the geometric Picard group of $X'$ is torsion-free. We define
a convenient class of $X'$-torsors, called `vertical' torsors. Such
$X'$-torsors always exist and are birationally equivalent to the product
of the variety $Y$ given by 
\begin{equation}\label{eqn1.1}
\sum_{j=1}^{2r} a_{ij} N(\bfz_j)+a_{i,2r+1}=0,\quad 1\leq i\leq r,
\end{equation}
where $a_{ij}\in F$ and $\bfz_j=(z_{n(j-1)+1},\ldots,z_{nj})$,
and the affine variety $N(\bfz)=a$, for some $a\in F^*$. 

We always write $s=2r+1$ and $m=[F:\Q]$. 
It is easy to show (see the proof of 
Theorem \ref{thm1.0} in Section \ref{desc}) 
that Condition I implies that the
coefficient $(r\times s)$-matrix $A=(a_{ij})$ satisfies the
following rank condition.

\smallskip

Condition II. {\em If we remove any column of $A$, the remaining
columns can be partitioned into two $(r\times r)$-matrices of full rank.}

\smallskip

Using descent we deduce 
Theorem \ref{thm1.0} from Theorem \ref{thm1.1} below and the 
well known theorem of Sansuc that 
the Brauer--Manin obstruction is the only obstruction to 
the Hasse principle and weak approximation on smooth compactifications
of principal homogeneous spaces of tori.

\begin{theorem}\label{thm1.1}
Let $Y$ be the affine variety given by {\rm (\ref{eqn1.1})} where
the matrix $A$ satisfies Condition II. Then 
the image of the natural map
\begin{equation*}
Y_\sm(F)\rightarrow \prod_\nu Y_\sm(F_\nu),
\end{equation*}
where $F_\nu$ ranges over all completions of $F$,
is dense in the product of local topologies.  
\end{theorem}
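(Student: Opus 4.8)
The plan is to prove Theorem~\ref{thm1.1} directly by the Hardy--Littlewood circle method, by establishing an asymptotic formula with a positive main term for the number of solutions of~(\ref{eqn1.1}) in an expanding box subject to congruence conditions. First I would reduce weak approximation to such a counting problem. Since weak approximation is invariant under Weil restriction, I may fix integral bases of $F/\Q$ and of $K/F$ and regard $Y$ as an affine $\Q$-variety cut out by $rm$ equations of degree $n$ in $2rnm$ rational variables, each being a $\Q$-linear combination of the coordinates of $N(\bfz_1),\dots,N(\bfz_{2r})$; as $N$ factors into $n$ linear forms over $\bar F$, these equations retain the arithmetic efficiency of norm forms. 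Given a finite set $S$ of places of $F$, points $P_\nu\in Y_\sm(F_\nu)$ for $\nu\in S$, and a target precision, I enlarge $S$ to contain the archimedean places and the places of bad reduction and, after clearing denominators, reduce to showing that the number $\calN(P)$ of integral solutions with $\abs{\bfz_j}\le P$ lying in a prescribed archimedean box and a prescribed residue class modulo a fixed $q$ tends to infinity with $P$. As the singular locus of~(\ref{eqn1.1}) is a proper closed subvariety, its contribution to $\calN(P)$ is of lower order, so solutions of large height land in $Y_\sm$; varying the box and the modulus yields Zariski density as well.

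Next I would run the circle method. Writing $e(x)=e^{2\pi i x}$ and letting $S_j(\bfalp)$ be the exponential sum over $\bfz_j$ in its box attached to the $j$-th norm form, with $\bfalp$ ranging over the $rm$-dimensional torus dual to the equations, one has $\calN(P)=\int_{(\R/\Z)^{rm}}\prod_{j=1}^{2r}S_j(\bfalp)\,\d\bfalp$ up to an inessential phase from the constants $a_{is}$ and the congruence weights. On a standard family $\grM$ of major arcs the usual approximation argument should give $\int_{\grM}\prod_j S_j\,\d\bfalp = \grS\,\calI\,P^{rmn}(1+o(1))$, where $\calI$ is the singular integral and $\grS$ the singular series. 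Here $\calI>0$ because the archimedean box is chosen around the image of the smooth local point $P_\nu$ at each archimedean $\nu\in S$, and $\grS>0$ because the chosen residue class modulo $q$ is compatible with the given $p$-adic points while the norm forms are locally soluble at all other primes; absolute convergence of $\grS$ is unproblematic, since the splitting of $N$ into $n$ linear forms gives the system far more variables than its degree and number of equations would require.

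The hard part will be the minor-arc bound $\int_{\grm}\prod_{j=1}^{2r}\abs{S_j(\bfalp)}\,\d\bfalp=o(P^{rmn})$. Because the projective hypersurface $N(\bfz)=0$ is highly singular, the classical Weyl and Birch estimates give nothing for the individual sums $S_j$, so instead I would exploit the multiplicative structure of the norm form: for $\bfalp$ near a rational point of denominator $q$ one relates $S_j(\bfalp)$ to sums over integral ideals of $K$ of bounded norm and then uses cancellation in the distribution of ideals of given norm, equivalently in the attached Hecke characters, to obtain a pointwise bound for $S_j$ with an explicit power saving in $q$. To turn this into a bound for the whole integral, I would isolate one factor $S_{j_0}$ and estimate the rest by repeated Cauchy--Schwarz; each resulting $L^2$-average over the torus counts the solutions of an auxiliary norm-form system, and it is exactly here that Condition~II is used, since after deleting the $j_0$-th column the remaining columns of $A$, together with the constant column, split into full-rank $(r\times r)$ blocks, making these auxiliary systems nondegenerate with solution count of the order predicted by the circle-method heuristic, up to $\eps$-losses. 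Because the relevant exponents are tight, combining the pointwise $q$-saving with these $L^2$-bounds so as to actually win a power of $P$ will require a careful summation over the minor arcs exploiting the explicit $q$-dependence (and possibly a Kloosterman-type refinement); this is the technical crux of the theorem. Once the minor arcs are dispatched, combining with the major-arc asymptotics gives $\calN(P)\to\infty$, and the theorem follows.
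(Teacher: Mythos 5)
The proposal follows the paper's broad outline (reduce weak approximation to an asymptotic count over $\Q$ via Weil restriction, major/minor arc decomposition, singular series and integral), and you correctly identify that Cauchy--Schwarz together with Condition~II is what controls the $L^2$ averages after isolating one factor. But there is a genuine gap in the minor-arc treatment, which is the heart of the argument. You dismiss the classical Weyl/Birch approach on the grounds that the hypersurface $N(\bfz)=0$ is highly singular, and propose instead to squeeze a pointwise power saving out of Hecke-character cancellation in ideal counts, admitting this may need a Kloosterman-type refinement. This is not the route the paper takes, and the dismissal rests on a misconception: what enters Birch's Weyl-type inequality is not the singular locus of the hypersurface $N=0$ but the locus $Y$ where \emph{all} partial derivatives $\partial N/\partial z_k$ vanish. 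By Euler's identity $nN(\bfz)=\sum_k z_k\,\partial N/\partial z_k$, this locus is contained in $\{N=0\}$ and therefore has dimension at most $mn-1$. Applied to the system of $m$ real forms $\Tr(\rho_i N(\bfz+\bfd))$ in the $mn$ real variables $z_{kl}$, Birch's Lemmas 3.2--3.3 then give a clean dichotomy: either $|S_j(\bfbet)|\ll P^{mn-\Delta}$, or $\bfbet$ admits a good rational approximation of small denominator. This is all that is needed on the minor arcs, and it is far more elementary and robust than the Hecke-character detour, which as you acknowledge is uncertain and would have to be made to win at all scales of $q$ up to $P^{\epsilon}$.

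Two further points. First, the paper homogenises $(\ref{eqn1.1})$ by introducing a $(2r+1)$-st norm variable, so that all $s=2r+1$ columns of the coefficient matrix correspond to exponential sums and Condition~II can be invoked symmetrically; working inhomogeneously as you propose, the constant column does not carry an exponential sum, so the Cauchy--Schwarz split into two $(r\times r)$ blocks cannot be performed without an awkward imbalance. Second, your remark that the absolute convergence of $\grS$ is ``unproblematic'' because there are far more variables than needed is not correct: here the number of variables is only \emph{linear} in the degree and the number of equations, which is exactly the regime where Birch's generic dimension-growth condition fails. The paper proves convergence by running an auxiliary circle method on $|S_1\cdots S_r|^2$ (again using Condition~II to change variables and the divisor bound $r(u)\ll P^\eps$ for $z\in\calO_K$ of fixed norm $u$), which is a real piece of work, not a free consequence of variable count.
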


This theorem establishes the Hasse principle and
weak approximation for $Y_\sm$.
To prove it we homogenise the system of equations
(\ref{eqn1.1}) using an extra norm form, and then apply
the Hardy--Littlewood circle method over $F$.
Write $\calO_F$ for the ring of integers of $F$.
Let $B=(b_{ij})$ be an $(r\times s)$-matrix with 
entries in $\calO_F$ that satisfies Condition II. 
Write $\bfx=(\bfx_1,\ldots,\bfx_s)$, and set
$$f_i(\bfx)=\sum_{j=1}^s b_{ij}N(\bfx_j)$$
for $1\leq i\leq r$.
We look for integer solutions of the system of equations 
\begin{equation}\label{eqn1.2}
f_i(\bfx)=0,\quad  1\leq i\leq r,
\end{equation} 
in a certain box. Moreover, we want these solutions to
satisfy congruence conditions. 
Let $\grn\subset \calO_F$ be an integral ideal, and let 
$\ome_1,\ldots,\ome_m$ be a $\Z$-basis of $\grn$. 
This is also a basis of the real vector space $V=F\otimes_\Q \R$. 
Fix $\kappa>0$ and $\bfu=(u_1,\ldots,u_{ns})\in V^{ns}$, and define the box
$\calB=\calB(\bfu,\kappa)\subset V^{ns}$ as 
\begin{equation*}
\calB(\bfu,\kappa)=\{\bfx\in V^{ns}: |x_{ij}-u_{ij}|\leq \kappa 
\mbox{ for } 1\leq i\leq ns \mbox{ and } 1\leq j\leq m \},
\end{equation*}
where the real variables $x_{ij}$ are defined by $x_i=\sum_{j=1}^m x_{ij}\ome_j$,
and similarly for $u_{ij}$. Fix also a vector $\bfd\in (\calO_F)^{ns}$. 
We are interested in the number of solutions
\begin{equation*}
N(\calB,P)=|\{\bfx\in (P\calB)\cap \grn^{ns}: f_i(\bfx +\bfd)=0 \mbox{
  for } 1\leq i\leq r\}|,
\end{equation*}
where $P$ is large. Theorem
\ref{thm1.1} is a corollary of the following result.
 
\begin{theorem}\label{thm1.2}
Let $\{\xi_1,\ldots,\xi_n\}\subset {\mathcal O}_K$ be a basis of $K$ 
as an $F$-vector space, and let $B$ be a matrix that satisfies 
Condition II. If
\begin{equation*}
{\rm rk} \left( \frac{\partial f_i}{\partial x_j}(\bfx)\right)=r
\end{equation*}
for any $\bfx\in\calB$, then
\begin{equation*}
N(\calB,P)=\mu(\calB)P^{mn(r+1)}+O(P^{mn(r+1)-\eta})
\end{equation*}
for some $\eta >0$,
where $\mu(\calB)$ is the product of local densities given
explicitly in equation (\ref{eqn7.1}) below. Moreover,
if the system of equations $f_i(\bfx+\bfd)=0$ has a
nonsingular solution in $\grn_\nu^{ns}$ for all finite places $\nu$
of $F$, and the system of equations $f_i(\bfx)=0$ has a nonsingular 
solution in $\calB$, then $\mu(\calB)>0$. 
\end{theorem}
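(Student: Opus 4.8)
The plan is to set this up as a standard application of the Hardy--Littlewood circle method over the number field $F$, adapted to counting points on the system $f_i(\bfx+\bfd)=0$ with the norm-form structure of the $f_i$. First I would pass from $F$ to $\Q$ by working with the $m$ real/complex embeddings simultaneously, so that each norm form $N(\bfx_j)$ becomes a form in $mn$ rational variables and the whole system becomes $mr$ equations in $mns$ variables; the box $P\calB$ and the lattice $\grn^{ns}$ translate into a genuine box of side $\sim P$ in $\Z^{mns}$. Then I would introduce the exponential sum
\begin{equation*}
S(\bfalp)=\sum_{\bfx\in(P\calB)\cap\grn^{ns}} e\!\left(\sum_{i=1}^r \Tr_{F/\Q}\!\big(\alp_i f_i(\bfx+\bfd)\big)\right),
\end{equation*}
so that $N(\calB,P)=\int_{[0,1)^{mr}} S(\bfalp)\,d\bfalp$, and dissect $[0,1)^{mr}$ into major and minor arcs in the usual way (with the rational points of the torus $(F\otimes\R)/\grn$ or its dual playing the role of rational approximations).

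The key steps, in order, are: (1) a Weyl-type bound on the minor arcs. Because each $f_i$ is a diagonal sum of $s=2r+1$ norm forms in disjoint blocks of $n$ variables, I expect to differentiate/square to reduce each norm form to a linear form and then invoke Condition II to guarantee that no nontrivial linear combination $\sum_i \lam_i f_i$ degenerates: Condition II says that after deleting any one block the remaining $2r$ blocks split into two invertible $r\times r$ systems, which is exactly what forces enough of the variables to appear with full rank so that a Weyl differencing / van der Corput argument gives power savings on the minor arcs. (2) The singular series: the product over finite places $\nu$ of the $p$-adic densities of solutions of $f_i(\bfx+\bfd)=0$ in $\grn_\nu^{ns}$, whose absolute convergence again uses that $s=2r+1$ is large enough relative to $r$ (the number of variables per equation, $mn$, comfortably exceeds what Birch-type bounds require once the rank hypothesis on the Jacobian holds), and whose positivity follows from the assumed existence of nonsingular local solutions. (3) The singular integral / real density $\mu_\infty(\calB)$, a smooth volume computed from the Jacobian rank hypothesis $\mathrm{rk}(\partial f_i/\partial x_j)=r$ on $\calB$, positive because of the assumed nonsingular real solution in $\calB$. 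Assembling (1)--(3) gives the asymptotic $N(\calB,P)=\mu(\calB)P^{mn(r+1)}+O(P^{mn(r+1)-\eta})$ with $\mu(\calB)=\mu_\infty(\calB)\prod_\nu\mu_\nu$ as in (\ref{eqn7.1}), and the final sentence on $\mu(\calB)>0$ is just the combination of the positivity statements in (2) and (3).

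The main obstacle will be step (1), the minor-arc estimate: one must show that the number of variables appearing with ``full rank'' in any nonzero linear combination of the $f_i$ is large enough to beat the $mr$-dimensional integration over the minor arcs, uniformly in the box $\calB$ and the shift $\bfd$. This is where Condition II does the real work, and the bookkeeping of translating a combinatorial rank condition on the $r\times s$ matrix $B$ into a quantitative lower bound on the rank of the relevant system of linear forms (after Weyl differencing the norm forms) is the delicate part; the auxiliary inhomogeneous shift $\bfd$ and the need to homogenise (\ref{eqn1.1}) via an extra norm form, as announced before the statement, are what make the geometry match up so that this rank bound is available. The archimedean and non-archimedean density computations, by contrast, are routine once the Jacobian rank hypothesis and the local solubility hypotheses are in place.
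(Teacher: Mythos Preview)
Your overall architecture (pass to $\Q$, set up $S(\bfalp)$, dissect into major and minor arcs, assemble singular series and singular integral) matches the paper, and your diagnosis that step (1), the minor-arc bound, is where the real content lies is correct. But the mechanism you sketch for (1) is not the one that actually works, and a direct Birch/Weyl-differencing approach will fail here.

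You write that ``the number of variables per equation, $mn$, comfortably exceeds what Birch-type bounds require.'' It does not. After passing to $\Q$ you have $mr$ forms of degree $n$ in $mns=mn(2r+1)$ variables, so roughly $2n$ variables per equation; Birch's method needs something like $(n-1)2^{n-1}$ times the number of equations, which is hopelessly out of reach. Likewise the framing ``no nontrivial linear combination $\sum_i\lambda_i f_i$ degenerates'' and ``lower bound on the rank of the relevant system of linear forms after Weyl differencing'' is the Birch/Schmidt picture, and that picture is not what makes this problem tractable.

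What the paper does instead is exploit the block structure. Since the $f_i$ are diagonal in the $s=2r+1$ norm-form blocks, the exponential sum factors as $S(\bfalp)=\prod_{j=1}^s S_j(\bflam_j)$ with $\bflam_j=\sum_i b_{ij}\bfalp_i$. The minor-arc argument then has three ingredients you are missing: (a) a Weyl/Birch inequality applied to a \emph{single} norm form $S_j(\bflam_j)$, giving $|S_{j_0}|\ll P^{mn-\Delta}$ for some $j_0$ whenever $\bfalp$ is on the minor arcs; (b) the Birch--Davenport--Lewis mean-value bound $\int_{[0,1]^m}|S_j(\bfbet)|^2\,\d\bfbet\ll P^{mn+\eps}$, which amounts to counting solutions of $N(\bfz_1)=N(\bfz_2)$ in a box and is essentially sharp; and (c) Cauchy--Schwarz on the remaining $2r$ factors. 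Condition II enters precisely at step (c): after removing the column $j_0$, the remaining $2r$ columns split into two invertible $r\times r$ blocks, so the change of variables $\bfalp\mapsto(\bflam_{j_1},\ldots,\bflam_{j_r})$ is legitimate in each half and the $L^2$ bound from (b) can be applied $r$ times in each. The same factor-and-Cauchy--Schwarz trick, not a Birch variable count, is what drives absolute convergence of the singular series (and of the singular integral). Without the $L^2$ mean value (b) and the factorisation, your plan has no way to beat the $mr$-fold integration on the minor arcs.
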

Here $\grn_\nu=\grn{\mathcal O}_\nu$,
where ${\mathcal O}_\nu$ is the ring of integers of $F_\nu$.
We specify the condition on the box $\calB$ to simplify 
the treatment of the singular integral.

Theorem \ref{thm1.2} is of interest because,
on the one hand, the number of variables in (\ref{eqn1.2}) is 
linear in the number of the equations and their degrees.
On the other hand, the catalogue of examples in which 
the circle method has been 
applied to number fields with conclusions independent of the
degree of the field, is extremely small (see for example
\cite{Bir1961b} and \cite{Ski1997}). 
Our approach relies on the work of
Birch, Davenport and Lewis \cite{BDL1962} and of Heath-Brown and
one of the authors \cite{HBSko}. For our
system of linear equations with variables replaced by norm forms
we obtain an asymptotic formula without weights, in contrast to \cite{SJ11}.

The paper is organised as follows. In Section \ref{desc} 
we describe vertical torsors for
the variety over a field $F$ whose $F$-points bijectively 
correspond to the representations of the values of an arbitrary 
polynomial in several variables by a norm from 
a finite extension $K/F$. We apply descent 
and deduce our main Theorem \ref{thm1.0} from Theorem 
\ref{thm1.1}, and prove Corollary \ref{cor1}. 
In Section 3 we set up the circle
method over number fields and prove Theorems \ref{thm1.1} and 
\ref{thm1.2}. 
 
\smallskip

\textbf{Acknowledgements.} The first author was partly 
supported by a DAAD scholarship. The second author was supported 
by the Centre Interfacultaire Bernoulli of the Ecole Polytechnique
F\'ed\'erale de Lausanne.
We are grateful to Prof. T.D. Wooley for suggesting this problem to us.

\section{Descent} \label{desc}

We begin by proving a slightly more general
descent statement than the one needed to deduce Theorem \ref{thm1.0}
from Theorem \ref{thm1.1}.

Let $F$ be a field of characteristic zero with an algebraic 
closure $\bar F$ and the Galois group $\Gam_F= \Gal (\bar F / F)$. 
When $X$ is an $F$-variety we write $\Xbar = X\times_F \bar F$.
We denote the smooth locus of $X$ by $X_\sm$.

Let $N(\bfz)$ be a norm form attached to a field extension
$K/F$ of degree $n$. Define
a hypersurface $X\subset \A_F^{r+n}$ by the equation
$P(\bft)=N(\bfz)$, where $P(\bft)$ is a non-constant polynomial in
$F[\bft]=F[t_1,\ldots,t_r]$. The closed subset $Y\subset \A_F^r$
given by $P(\bft)=0$ is the union of irreducible components
$Y=Y_1\cup\ldots\cup Y_d$. For each $i=1,\ldots,d$ 
choose a geometrically irreducible
component $Y'_i\subset\bar Y_i$, and let $F_i\subset \bar F$ 
be the invariant subfield of the stabiliser of $Y'_i$ in
$\Gam_F$. Let $P_i(\bft)\in F_i[\bft]$ be an absolutely irreducible 
polynomial in $\bft=(t_1,\ldots,t_r)$ such that $Y'_i$ is given by
$P_i(\bft)=0$. 
Let us use $N_{F_i/F}$ as an abbreviation for the norm
$N_{F_i(\bft)/F(\bft)}$.
Then $N_{F_i/F}(P_i(\bft))$ is an irreducible polynomial 
in $F[\bft]$ such that $Y_i$ is given by $N_{F_i/F}(P_i(\bft))=0$.
Thus the hypersurface $X\subset \A_F^{r+n}$ can be given by
\begin{equation}\label{uno}
\prod_{i=1}^{d}N_{F_i/F}(P_i(\bft))^{e_i}= c N(\bfz),
\end{equation}
where $c\in F^*$ and $e_1,\ldots, e_d$ are positive integers. 

Let $\bfy_i$ be a variable with values in $K\otimes_F F_i$,
for $i=1,\ldots,d$. Consider the quasi-affine subvariety 
$V\subset \A^r_F\times\prod_{i=1}^d 
R_{K\otimes_F F_i/F}(\A^1)$ defined by
\begin{equation}
P_i(\bft)=\varrho_i N_{K\otimes_F F_i/F_i}(\bfy_i)\not=0,
\label{tres}
\end{equation}
where $\varrho_i\in F_i^*$ and $i=1,\ldots,d$. 

\begin{theorem} \label{new}
Let $F$ be a number field. Suppose that for any
$\varrho_i\in F_i^*$, $i=1,\ldots,d$, the variety $V$
satisfies the Hasse principle and weak approximation. Then
the Brauer--Manin obstruction is the only 
obstruction to the Hasse principle and weak approximation on any smooth 
and proper model of the affine hypersurface $X$.
\end{theorem}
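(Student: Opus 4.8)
The plan is to carry out the standard Colliot-Th\'el\`ene--Sansuc descent machinery, following the template of \cite{CHS}, with the variety $V$ of (\ref{tres}) playing the role of a ``vertical'' torsor over a suitable smooth partial compactification of $X_\sm$. First I would replace $X$ by a convenient open smooth model: namely I would construct a smooth partial compactification $X'$ of $X_\sm$ by adjoining to $X_\sm$ those points of the projective closure lying over the generic points of the divisors $N_{F_i/F}(P_i(\bft))=0$ and over the divisor at infinity, choosing $X'$ so that (a) $\bar{F}[X']^*=\bar{F}^*$, i.e. there are no non-constant invertible regular functions, and (b) $\Pic(\Xbar')$ is torsion-free and finitely generated. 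The second requirement is where some care is needed: one computes $\Pic(\Xbar')$ from the exact sequence relating $\Div$ of the boundary divisors to $\Pic$ of the projective model, using that over $\bar F$ the norm form $N(\bfz)$ factors into linear forms so that the relevant divisors and their classes can be written down explicitly. Granting (a) and (b), a smooth compactification $X^c$ of $X'$ has $\Br(X^c)/\Br(F)$ finite and the Colliot-Th\'el\`ene--Sansuc theory of universal torsors applies.

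Next I would introduce the torsor structure. Over $X'$ one has the torus $T$ which is (up to isogeny/quotient) the product $\prod_i R^{1}_{K\otimes_F F_i/F_i}(\G_m)$ of norm-one tori pulled back along the functions $P_i$; the equations (\ref{tres}) exhibit $V$ (for varying $\varrho_i$) as the total space of a torsor under an appropriate torus $S$ over $X'$, and a twisting argument shows that as the $\varrho_i$ range over $F_i^*/N_{K\otimes_F F_i/F_i}((K\otimes_F F_i)^*)$ — a finite set — the $V$'s exhaust all the twists of one fixed torsor whose type is injective (in fact the relevant torsors are ``vertical'' in the sense of being built from the fibration $X'\to$ (the multi-norm variety), so their type is controlled). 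The key descent input, Skorobogatov's version of the descent theorem, then gives: if $X^c(\A_F)^{\Br}\neq\emptyset$ then some twist $V=V_{\bflam}$ has $V(\A_F)\neq\emptyset$, and moreover the adelic points of $X^c$ surviving the Brauer--Manin obstruction are covered by the images of $V_{\bflam}(\A_F)$ over the finitely many $\bflam$. Here I would also need that $V$ has a smooth compactification with the ``no Brauer obstruction'' property automatically — but that is exactly the hypothesis of the theorem (Hasse principle and weak approximation for every $V$), so this step is where the assumption is consumed.

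The conclusion then assembles as follows. Assume $X^c(\A_F)^{\Br}\neq\emptyset$; pick an adelic point $(P_\nu)$ surviving the obstruction and a finite set of places where we want to approximate. By the descent theorem there is a choice of $\bflam$ and an adelic point of $V_{\bflam}$ mapping close to $(P_\nu)$ at the chosen places. By hypothesis $V_{\bflam}$ satisfies weak approximation and has a rational point, hence there is $v\in V_{\bflam}(F)$ arbitrarily close to that adelic point; its image in $X'(F)\subset X_\sm(F)\subset X^c(F)$ is then close to $(P_\nu)$ at the chosen places. This proves that $X^c(F)$ is dense in $X^c(\A_F)^{\Br}$, i.e. the Brauer--Manin obstruction is the only one; taking $X^c$ to be any smooth proper model (all such have the same $\Br/\Br(F)$ and the same closure of rational points up to the places of good behaviour) gives the statement for an arbitrary smooth and proper model of $X$, and the same argument with an empty obstruction set shows $X(F)$ is Zariski dense when nonempty.

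The main obstacle I expect is step one: verifying that one can choose the partial compactification $X'$ with $\Pic(\Xbar')$ torsion-free (and with no nonconstant units), since the naive affine model (\ref{uno}) has a large, possibly torsion-laden, geometric Picard group coming from the many components of the divisors $P_i=0$ and of $N(\bfz)=0$ over $\bar F$; getting the combinatorics of which boundary components to add right — so that the resulting lattice of divisor classes is torsion-free and the torsors of interest are genuinely ``vertical'' — is the technical heart, and it is precisely the part that ``closely follows \cite{CHS}''.
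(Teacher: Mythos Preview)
Your overall architecture --- build a partial compactification $X'$ with $\bar F[X']^*=\bar F^*$ and torsion-free $\Pic(\bar X')$, run Colliot-Th\'el\`ene--Sansuc descent, and feed the hypothesis on $V$ into the conclusion --- is the right one and matches the paper. But there is a genuine gap in the torsor step that would make the argument fail as written.

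You assert that the equations (\ref{tres}) exhibit $V$ as a torsor over $X'$, and in the final paragraph you speak of ``its image in $X'(F)$''. But $V$ has no map to $X$ at all: its coordinates are $(\bft,\bfy_1,\dots,\bfy_d)$, and the only natural projection lands in $U_0\subset\A^r_F$, forgetting the variable $\bfz$ entirely. What the paper shows (Lemma~\ref{3.1}) is that the correct vertical torsor $\calT$ over $X'$ satisfies $\calT_U\cong E\times V$, where $E$ is a principal homogeneous space of the norm-one torus $T=R^1_{K/F}(\G_m)$, given by $\prod_i N_{F_i/F}(\varrho_i)^{e_i}=cN(\bfz)$. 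The factor $E$ is exactly what carries the $\bfz$-coordinate and produces the map back to $U\subset X'$. Consequently, after you use the hypothesis to approximate on the $V$-factor, you still have to approximate on the $E$-factor, and for that you need Sansuc's theorem that the Brauer--Manin obstruction is the only one for smooth compactifications of principal homogeneous spaces of tori. Your proposal never invokes Sansuc.

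There is a second, related subtlety you gloss over. To apply Sansuc to $E$ you need the projected adelic point to lie in $E^c(\AA)^{\Br_1(E^c)}$. The paper arranges this by first lifting the Brauer--Manin--orthogonal adelic point on $X'$ to a \emph{universal} torsor $\calT_0$ (where $\Br_1(\calT_0)=\Br_0(\calT_0)$ automatically), and only then passing to the quotient vertical torsor $\calT=\calT_0/T_1$; the Brauer orthogonality on $\calT$ then transfers to $E^c$ via a rational map and Grothendieck purity. Your version jumps straight to a torsor under ``an appropriate torus $S$'' and loses track of this Brauer bookkeeping. Without it, you cannot justify that the $E$-component of your adelic point is unobstructed.
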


\begin{proof}
Let $\pi: X\to \A_F^r$ be the
morphism defined by the projection to coordinates $t_1,\ldots, t_r$. 
Define $U_0\subset \A_F^r$ as the open subset given by $N(\bfz)\neq 0$,
and $U=\pi^{-1}(U_0)$.
We note that $\bar U\cong \bar U_0\times \G_{m,\bar F}^{n-1}$, and this
implies $\Pic(\bar U)=0$. 

We can write $N(\bfz)$ as the product $\prod_{i=1}^n u_i(\bfz)$ of 
linearly independent linear forms with coefficients in
$\bar F$. It is easy to check that the complement to the union
of closed subsets given by $u_i=u_j=0$ for all $i\neq j$, is smooth. 
Thus $\pi(X_\sm)=\A_F^r$ and $U\subset X_\sm$. 

Recall that $R_{K/F}(\G_{m,K})$ is a torus over $F$ defined as the Weil 
restriction of the multiplicative group $\G_{m,K}$. The module of characters
of $R_{K/F}(\G_{m,K})$ is the induced $\Gam_F$-module 
$\Z[\Gam_F/\Gam_K]$ that will be denoted by $\Z[K/F]$. The norm torus
$T$ is the kernel of the surjective homomorphism
$R_{K/F}(\G_{m,K})\to \G_{m,F}$ given by the norm $N_{K/F}$,
so $T$ is the affine hyperplane $N(\bfz)=1$. The module of
characters $\widehat T$ fits into the exact sequence of $\Gam_F$-modules
$$0\to\Z\to \Z[K/F]\to\widehat T\to 0,$$ 
where $1\in\Z$ goes to the sum of canonical generators of $\Z[K/F]$.

It is known (see, e.g., \cite{CHS2}) that $T$, 
like any other torus, has a smooth equivariant compactification. 
This is a smooth, projective and geometrically integral variety $T^c$ 
over $F$ with an action of 
$T$ that contains an open $T$-orbit isomorphic to $T$. 
The contracted product $U^c=U\times^T T^c$ can be defined 
as the quotient of $U\times T^c$ by the simultaneous action 
of $T$ on both factors. Thus the morphism $\pi:U\to U_0$ extends to a smooth
and proper morphism $U^c\to U_0$, and $U$ is open and dense in $U^c$. Moreover,
each geometric fibre of $U^c\to U_0$ is a smooth compactification
of $T$. Let $X'$ be the scheme over $F$ obtained
by gluing $X_\sm$ and $U^c$ along $U$. The argument in \cite[p. 71]{CHS}
shows that $X'$ is separated, hence $X'$ is a variety. We denote the natural
morphism $X'\to\A^r_F$ also by $\pi$. Since the generic fibre $X'_\eta$ 
of this morphism
is projective and geometrically integral, by restricting an invertible 
regular function $f$ on $\bar X'$ to $X'_\eta$ we see that $f\in \bar F(\A^r_F)$.
However, the morphism $\pi:X'\to\A^r_F$ is surjective, hence if the 
divisor of $f$
in $\A^r_F$ is non-zero, the divisor of $f$ in $\bar X'$ is non-zero too.
We conclude that $\bar F[X']^*=\bar F^*$, that is, $\bar X'$ has 
no non-constant invertible regular functions.

It is clear that the geometrically irreducible components
of the hypersurface $Y_i\subset\A_F^r$ 
form a $\Gam_F$-stable $\Z$-basis of the free abelian group
$\Z[F_i/F]$. We thus have a natural isomorphism of $\Gam_F$-modules
$$\bar F[U_0]^*/\bar F^*=\oplus_{i=1}^d \Z[F_i/F].$$
The geometrically irreducible components of $X'\setminus U^c$ 
form a $\Gam_F$-stable $\Z$-basis of the free abelian group
of divisors on $\Xbar'$ with support outside of $\bar U^c$:
$$\Div_{\Xbar'\setminus \bar U^c}(\Xbar')=
\Z[K/F]\otimes (\oplus_{i=1}^d \Z[F_i/F]).$$
We call an irreducible divisor $D\subset \bar X'$ horizontal if $\pi$ induces
a dominant map $D\to \A_{\bar F}^r$. The subgroup of 
$\Div_{\Xbar'\setminus \Ubar}(\Xbar')$ generated by horizontal divisors is
$\Div_{\Ubar^c\setminus \Ubar}(\Ubar^c)$, which is isomorphic to 
$\Div_{\bar T^c\setminus\bar T}(\bar T^c)$
as a $\Gam_F$-module, see \cite[Lemma 2.1]{CHS}.
We obtain a direct sum decomposition of $\Gam_F$-modules
$$\Div_{\bar X'\setminus\bar U}(\bar X')=
\Div_{\bar T^c\setminus\bar T}(\bar T^c) \oplus 
\Div_{\Xbar'\setminus \bar U^c}(\Xbar').$$
There is a commutative diagram of $\Gam_F$-modules with exact rows and columns
\begin{equation}\begin{array}{ccccccccc}
&&0&&0&&0&&\\
&&\downarrow&&\downarrow&&\downarrow&&\\
0&\to&\bar F[U_0]^*/\bar F^*&\to& 
\Div_{\Xbar'\setminus \bar U^c}(\Xbar') &\to& 
\widehat T\otimes (\oplus_{i=1}^d \Z[F_i/F]) &\to&0\\
&&\downarrow&&\downarrow&&\downarrow&&\\
0&\to&\bar F[U]^*/\bar F^*&\to&\Div_{\bar X'\setminus\bar U}(\bar X')&\to&
\Pic(\bar X')&\to&0\\
&&\downarrow&&\downarrow&&\downarrow&&\\             
0&\to&\widehat T&\to&\Div_{\bar T^c\setminus\bar T}(\bar T^c)
&\to&\Pic(\bar T^c)&\to&0\\
&&\downarrow&&\downarrow&&\downarrow&&\\   
&&0&&0&&0&& 
\end{array}\label{due}
\end{equation}
constructed in the same way as the diagram in
\cite[Prop. 2.2]{CHS}.
The injective maps in the top and middle rows are induced by the map 
$\div_{\bar X'}$ sending a function 
to its divisor in $\bar X'$. The middle row is exact because
$\bar F[X']^*=\bar F^*$ and $\Pic(\bar U)=0$.
The vertical maps from the middle row to the bottom row are given by
the restriction to the generic fibre of $\pi:\bar X'\to\A^r_{\bar F}$.
We refer to \cite[Prop. 2.2]{CHS} for the identification 
of the modules and the maps in the bottom row.
The smooth and projective variety $\bar T^c$ is rational, hence
$\Pic(\bar T^c)$ is torsion-free. From the exactness of the right hand column
of (\ref{due}) we see that $\Pic(\bar X')$ is torsion-free.

We refer to \cite[Section 2]{Sko} for more details on torsors, in 
particular, for the definition of the type of a torsor under a torus.
Let $\lambda$ be the injective map of $\Gam_F$-modules
from the right hand column of (\ref{due}). 
We shall call a torsor $\calT\to X'$ 
of type $\lambda$ a \emph{vertical} torsor. Let
$\calT_U$ be the restriction of $\calT$ to $U\subset X'$.

\begin{lemma}\label{3.1}
Vertical $X'$-torsors exist. For each such torsor $\calT$ there exist
a principal homogeneous space $E$ of the torus $T$,
and $\varrho_i\in F_i^*$, $i=1,\ldots,d$, such that
$\calT_U=E\times V$, where $V$ is defined in {\rm (\ref{tres})}.
\end{lemma}

\begin{proof}
Recall that $\Pic(\bar U)=0$, and take
the two upper rows of our diagram (\ref{due}) as the 
diagram (4.21) of \cite{Sko}. 
An immediate application of the local description of torsors 
\cite[Thm. 4.3.1]{Sko} shows that 
$\calT_U$ is given by (\ref{uno}) together with (\ref{tres}). Let
$E$ be the principal homogeneous space of $T$ with the equation
$$\prod_{i=1}^{d}N_{F_i/F}(\varrho_i)^{e_i}=cN(\bfz).$$
Multiplying $\bfz$ by $\prod_{i=1}^{d}N_{K\otimes_F F_i/K}(\bfy_i)^{e_i}$
we get an isomorphism $\calT_U=E\times V$.
\end{proof}

We resume the proof of Theorem \ref{new}.

Recall that $\Br_0(X)$ is the image of the natural
map $\Br(F)\to\Br(X)$, and $\Br_1(X)$ is the kernel of the natural
map $\Br(X)\to \Br(\bar X)$. 

It suffices to consider one smooth and proper model of $X$ over $F$.
We can take it to be a smooth and projective variety $X^c$ 
that contains $X'$ as a dense open subset. 
(Since $F$ has characteristic zero,
such a variety exists by Hironaka's theorem.) 

Let $\AA$ be the ring of ad\`eles of the number field $F$.
Let $(M_\nu)\in X^c(\AA)^{\Br_1(X^c)}$ be a collection of local points
$M_\nu\in X^c(F_\nu)$, one for each place
$\nu$ of $F$, orthogonal to $\Br_1(X^c)$. 
By a theorem of Grothendieck,
$\Br_1(X^c)$ is naturally a subgroup of $\Br_1(X')$.
We have seen that $\bar F[X']^*=\bar F^*$ 
and $\Pic(\bar X')$ is torsion-free.
It is well known that this implies that
$\Br_1(X')/\Br_0(X')$ is a subgroup of $H^1(F,\Pic(\bar X'))$,
and hence is finite. Thus we can use
\cite[Prop.~1.1]{CTSk} (a consequence of Harari's `formal lemma') which
says that the natural injective map of topological spaces
$$X'(\AA)^{\Br_1(X')}\rightarrow X^c(\AA)^{\Br_1(X^c)}
=\big(\prod_\nu X^c(F_\nu)\big)^{\Br_1(X^c)}$$
has a dense image. Thus we can assume $(M_\nu)\in X'(\AA)^{\Br_1(X')}$.
Furthermore, using the finiteness of $\Br_1(X')/\Br_0(X')$
and the fact that the value of an element of $\Br_1(X')$ at a point of
$X'(F_\nu)$ is locally constant in the topology of $F_\nu$, we can assume
without loss of generality that $M_\nu\in U(F_\nu)$ for all $\nu$.

The main theorem of the descent theory
of Colliot-Th\'el\`ene and Sansuc states that every point in
$X'(\AA)^{\Br_1(X')}$ is in the image of the map
$\calT_0(\AA)\to X'(\AA)$, where $\calT_0\to X'$ is a universal torsor 
(see \cite[Section 3]{CS87} and \cite[Thm.~6.1.2(a)]{Sko}).
Thus we can find a point $(N_\nu)\in\calT_0(\AA)$ such that
the image of $N_\nu$ in $X'$ is $M_\nu$ for all $\nu$. 

The structure group of $\calT_0\to X'$ is the N\'eron--Severi torus $T_0$
defined by the property $\widehat T_0=\Pic(\bar X')$.
The right hand column of (\ref{due}) gives rise to 
the dual exact sequence of tori
$$1\to T_1\to T_0\to T_2\to 1,$$ which is the definition of $T_1$ and $T_2$.
The quotient $\calT=\calT_0/T_1$ is an $X'$-torsor with the structure 
group $T_2$. The type of $\calT\to X'$ is the natural map
$$\widehat T_2=\widehat T\otimes (\oplus_{i=1}^d \Z[F_i/F])
\longrightarrow\Pic(\bar X'),$$
so $\calT$ is a vertical torsor. Since $\calT_0$ is a universal torsor,
we have $\bar F[\calT_0]^*=\bar F^*$ and $\Pic(\bar \calT_0)=0$,
hence $\Br_1(\calT_0)=\Br_0(\calT_0)$. 

Let $(P_\nu)\in \calT(\AA)$ be the image of $(N_\nu)$. By the functoriality
of the Brauer--Manin pairing we see that 
$(P_\nu)\in \calT(\AA)^{\Br_1(\calT)}$. By Lemma \ref{3.1}
the restriction of $\calT$ to $U$ is isomorphic to $E\times V$,
where $V$ is given by (\ref{tres}). Let $S$ be a finite set of places
of $F$ containing all the places where we need to approximate.
By assumption we can find
an $F$-point in $V$ close to the image of $P_\nu$ in $V(F_\nu)$
for $\nu\in S$.

The argument in \cite[p. 85]{CHS} shows that there is
an $F$-point in $E$ close to the image of $P_\nu$ in $E(F_\nu)$
for $\nu\in S$.
We reproduce this
argument for the convenience of the reader.
Let $E^c$ be a smooth compactification of $E$. The projection
$\calT_U=E\times V\to E$ extends to a rational map $f$
from the smooth variety $\calT$ to the projective variety $E^c$.
By a standard result of algebraic geometry there is an
open subset $W\subset\calT$ with complement $\calT\setminus W$
of codimension at least 2 in $\calT$ such that $f$ is a 
morphism $W\to E^c$. By Grothendieck's purity theorem the natural
restriction maps $\Br(\calT)\to\Br(W)$ and
$\Br(\bar\calT)\to\Br(\bar W)$ are isomorphisms.
Hence $\Br_1(\calT)\to\Br_1(W)$ is also an isomorphism.
Thus $f^*\Br_1(E^c)\subset\Br_1(W)$ is contained in
$\Br_1(\calT)$, and so the image of $(P_\nu)$ in $E^c$
belongs to $E^c(\AA)^{\Br_1(E^c)}$. By Sansuc's theorem,
$E(F)$ is a dense subset of $E^c(\AA)^{\Br_1(E^c)}$.

We conclude that there is a point in $\calT(F)$ which is arbitrarily
close to $P_\nu$ for $\nu\in S$. The image of this point in $X^c(F)$ approximates
$(M_\nu)$. This finishes the proof of Theorem \ref{new}.
\end{proof}

\begin{proof}[Proof of Theorem \ref{thm1.0}]
Consider the particular case of 
(\ref{uno}) where for each $i=1,\ldots,d$ we have $F_i=F$ 
and $P_i(\bft)$ is a linear polynomial $L_i(\bft)$.
Then the natural projection $\A^r_F\times\prod_{i=1}^d R_{K/F}(\A^1_K)
\to \prod_{i=1}^d R_{K/F}(\A^1_K)$ defines 
an isomorphism $V=V_0\times \A^g_F$, where $g$ is the dimension
of the kernel of the linear map $F^r\to F^d$ given by the homogeneous
parts of $L_1,\ldots,L_d$, and $V_0$ is defined as follows.
For some
$\varrho_1,\ldots, \varrho_d\in F^*$ the variety $V_0$ is given by
the equations in $K$-variables $\bfy_1,\ldots, \bfy_{d}$:
\begin{equation}\label{deqn1}
\sum_{i=1}^d\lam_i\varrho_i N(\bfy_i)+\lam_{d+1}=0, \quad \quad
N(\bfy_i)\not=0, \ \ i=1,\ldots,d,
\end{equation}
for all vectors $(\lam_1,\ldots, \lam_{d+1})\in F^{d+1}$ satisfying
\begin{equation}\label{eqnlam}
\sum_{i=1}^{d}\lam_i L_i(\bft)+\lam_{d+1}=0
\end{equation}
identically in $\bft=(t_1,\ldots, t_r)$.

In the situation of Theorem \ref{thm1.0} we have $d=2r$,
and the linear polynomials $L_1,\ldots, L_{2r}, L_{2r+1}=1$ satisfy 
Condition I. In particular, their linear span has dimension $r+1$
and we have $V=V_0$. Thus the vectors
$\bflam = (\lam_1,\ldots, \lam_{2r+1})$ satisfying equation (\ref{eqnlam})
form an $r$-dimensional subspace $\Lambda\subset F^{2r+1}$. Let
$\bflam^{(1)},\ldots, \bflam^{(r)}$ be a basis of $\Lambda$. 
Set $a_{ij}=\lam_j^{(i)}\rho_j$ for $1\leq i\leq r$ and $1\leq j\leq 2r+1$,
where we set $\rho_{2r+1}=1$.
Thus $V$ is a dense open subset of the affine variety given
by the system of equations (\ref{eqn1.1}).

An easy linear algebra argument shows that if $L_1,\ldots, L_{2r+1}$ 
satisfy Condition I, then the matrix $(a_{ij})$ satisfies Condition II. 
Indeed, take any $j_0$ from $1$ to $2r+1$ and write 
$\{L_1,\ldots, L_{2r+1}\}$ as the union
of subsets of linearly independent functions $\calA = \{L_{j_0},
L_{j_1},\ldots, L_{j_r}\}$ and $\calB = \{ L_{j_0},L_{j_{r+1}},\ldots,
L_{j_{2r}}\}$. Since the elements of $\calB$ are linearly independent,
there exists a unique vector in $\Lambda$ whose coordinates with subscripts
$j_1,\ldots,j_r$ are arbitrary elements of $F$. It follows that the matrix
$(a_{ij_l})_{1\leq i\leq r , 1\leq l\leq r}$ has full rank.
The elements of $\calA$ are also linearly independent, so
the matrix $(a_{ij_{r+l}})_{1\leq i\leq r , 1\leq l\leq r}$
has full rank too. Hence the matrix $(a_{ij})$ satisfies Condition II,
and now the result follows from Theorems \ref{new} and \ref{thm1.1}.
\end{proof}

\begin{proof}[Proof of Corollary \ref{cor1}]
The arguments in the proof of \cite[Cor. 2.7]{CHS} apply \emph{verbatim}
in our situation, establishing $\Br(X'_c)=\Br_0(X'_c)$ in case (i). 
The proof of the statement in the example \cite[p. 77-78]{CHS} gives 
the same conclusion in case (ii).
\end{proof}

\section{Circle method}

\subsection{Preliminaries}
We write $\Tr=\Tr_{F/\Q}$ for the trace from $F$ to $\Q$. Let 
\begin{equation*}
\calC = \{x\in F: \Tr (xy) \in \Z \mbox{ for all } y\in \calO_F\}
\end{equation*}
be the inverse different. We extend $\Tr$ to 
a linear form $V=F\otimes_\Q\R\to \R$. 
Choose a $\Z$-basis $\zet_1,\ldots,\zet_m$
of $\calO_F$. Let $\rho_1,\ldots,\rho_m$ be the dual basis of $\calC$
defined by the property that the matrix with entries
$\Tr (\zet_i\rho_j)$ is the identity matrix. Then any $x\in F$
can be written as 
\begin{equation}
x=\sum_{i=1}^m \Tr(x\rho_i)\zeta_i. \label{dva}
\end{equation}
We set $$f_{ik}(\bfx)=\Tr (\rho_k f_i(\bfx)).$$

Let $(c_{ik})_{1\leq i\leq r,1\leq k\leq q}$ be
a $(r\times q)$-matrix with entries in the $\R$-algebra $V$.
We shall say that the rank of this matrix is $r$ 
if it defines a surjective linear map $V^q\to V^r$. 
Recall that 
$\ome_1,\ldots,\ome_m$ is a $\Z$-basis of the ideal $\grn\subset \calO_F$.
We attach to $(c_{ik})$ the $(rm\times qm)$-matrix with real entries
$(\Tr (c_{ik}\rho_j\ome_l))_{(i,j),(k,l)}$, where we use
the lexicographic ordering of the pairs 
$(i,j)$, $1\leq i\leq r$, $1\leq j\leq m$, and the pairs $(k,l)$, 
$1\leq k\leq q$, $1\leq l\leq m$. 

The following observation will be often used in this paper.

\begin{lemma}\label{linalg}
The matrix $(c_{ik})$ with entries in $V$ has rank $r$ if and only if 
the matrix $(\Tr (c_{ik}\rho_j\ome_l ))_{(i,j),(k,l)}$ with entries in 
$\R$ has rank $mr$.
\end{lemma}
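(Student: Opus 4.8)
The statement is a linear-algebra fact relating the rank of the $(r\times q)$-matrix $(c_{ik})$ over the $\R$-algebra $V=F\otimes_\Q\R$ to the rank over $\R$ of the associated $(rm\times qm)$-matrix $(\Tr(c_{ik}\rho_j\ome_l))$. The cleanest approach is to recognize that $V$, as an $\R$-algebra, is a product of fields $\prod_{v\mid\infty}F_v$ (real and complex places), but in fact we do not even need that structure: the key point is that the $\R$-linear map $V^q\to V^r$ given by the matrix $(c_{ik})$ is being compared with its ``matrix representation'' after a choice of $\R$-bases of source and target, and rank is a basis-independent notion. So first I would fix the $\R$-basis of $V$ coming from $\ome_1,\dots,\ome_m$ (a $\Z$-basis of $\grn$, hence an $\R$-basis of $V$) on the source side, and the $\R$-basis $\zet_1,\dots,\zet_m$ of $\calO_F$ (equivalently, using the dual basis $\rho_1,\dots,\rho_m$ of $\calC$ via $x=\sum_j\Tr(x\rho_j)\zet_j$ from (\ref{dva})) on the target side.

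\textbf{Key steps.} Write the given $\R$-linear map $\Phi\colon V^q\to V^r$, $\Phi((y_k)_k)_i=\sum_k c_{ik}y_k$. Using $\ome_l$ on the source, a general element of $V^q$ is $(y_k)$ with $y_k=\sum_l b_{kl}\ome_l$, $b_{kl}\in\R$; using the coordinate functionals $x\mapsto\Tr(x\rho_j)$ on the target, the $(i,j)$-coordinate of $\Phi((y_k))$ is
\[
\Tr\!\Big(\rho_j\sum_k c_{ik}y_k\Big)=\sum_{k,l}\Tr(c_{ik}\rho_j\ome_l)\,b_{kl}.
\]
Thus the matrix of $\Phi$ with respect to these $\R$-bases is precisely $(\Tr(c_{ik}\rho_j\ome_l))_{(i,j),(k,l)}$, an $(rm\times qm)$-matrix. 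Since the $\R$-rank of a linear map equals the rank of any matrix representing it, $\Phi$ is surjective if and only if this matrix has rank $rm$. By definition (stated in the paragraph preceding the lemma) $(c_{ik})$ ``has rank $r$'' means exactly that $\Phi$ is surjective, which completes the equivalence. The one point requiring a line of justification is that $x\mapsto\big(\Tr(x\rho_j)\big)_{1\le j\le m}$ is an $\R$-linear isomorphism $V\xrightarrow{\sim}\R^m$: this is immediate from (\ref{dva}), since it exhibits the inverse as $(\beta_j)\mapsto\sum_j\beta_j\zet_j$, and both maps are $\R$-linear because $\Tr\colon V\to\R$ is.

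\textbf{Main obstacle.} There is no serious obstacle; the whole content is bookkeeping with two dual $\R$-bases of $V$, and the only thing to be careful about is consistency of the lexicographic orderings of the index pairs $(i,j)$ and $(k,l)$, which affects only the arrangement of entries and not the rank. I would therefore present the proof in three short moves: (1) note $x\mapsto(\Tr(x\rho_j))_j$ is an $\R$-isomorphism $V\cong\R^m$ and $y\mapsto(\text{coords in the }\ome_l\text{ basis})$ likewise; (2) compute that under these identifications $\Phi$ becomes the stated real matrix; (3) invoke invariance of rank under change of basis together with the definition of ``rank $r$'' for matrices over $V$.
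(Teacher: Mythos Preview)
Your proposal is correct and is essentially the same approach as the paper's proof: both identify the real matrix $(\Tr(c_{ik}\rho_j\ome_l))$ as the matrix of the $\R$-linear map $\Phi\colon V^q\to V^r$ with respect to suitable $\R$-bases, and then invoke that surjectivity is equivalent to the matrix having full row rank. The only cosmetic difference is that the paper expands $d_i$ in the $\ome_p$-basis on the target side and explicitly notes that the change-of-basis matrix $(\Tr(\ome_p\rho_j))$ is invertible by non-degeneracy of the trace form, whereas you use the $\zet_j$-basis directly via (\ref{dva}); this amounts to the same thing.
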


\begin{proof}
Take any $d_1,\ldots,d_r\in V$ and 
write $d_i=d_{i1}\ome_1+\ldots+d_{im}\ome_m$. 
If there exist $\mu_1,\ldots,\mu_q\in V$  such that
$ c_{i1}\mu_1+\ldots+  c_{iq}\mu_q=d_i$, for $1\leq i\leq r$,
we write $\mu_k=\mu_{k1}\ome_1+\ldots +\mu_{km}\ome_m$
and then obtain
\begin{equation}
\sum_{k=1}^q \sum_{l=1}^m \mu_{kl}\Tr (c_{ik}\rho_j\ome_l) = 
\sum_{p=1}^m d_{ip}\Tr (\ome_p\rho_j),\label{odin}
\end{equation}
for $1\leq i\leq r$ and $1\leq j\leq m$. 
The $(m\times m)$-matrix $\Tr (\ome_p\rho_j)$
is invertible by the non-degeneracy of the bilinear form
$\Tr(xy): F\times F\to\Q$. Thus the rank of
$(\Tr( c_{ik}\rho_j\ome_l))$ is $mr$. Conversely, from (\ref{odin}) 
using (\ref{dva}) we deduce  
$$\sum_{k=1}^q c_{ik}\sum_{l=1}^m \mu_{kl}\ome_l = d_i.$$
This finishes the proof of the lemma.
\end{proof}

Let $Z$ be the affine variety over $F$ defined by the system of 
equations (\ref{eqn1.2}). The Weil restriction $R_{F/\Q}(Z)$ 
is the variety over $\Q$ defined by the system of equations 
$f_{ij}(\bfx)=0$ for $1\leq i\leq r$ and $1\leq j\leq m$. 
For any $\Q$-algebra $S$ there is a natural bijection of points 
$R_{F/\Q}(Z)(S)= Z(S\otimes_\Q F)$. 
A useful consequence of Lemma \ref{linalg} is the observation that a 
$V$-point of $Z$ is singular if and only if the corresponding 
$\R$-point on the variety $R_{F/\Q}(Z)$ is singular.
Indeed, by the chain rule we have
\begin{equation}\label{eqn5.0}
\frac{\partial f_{ij}}{\partial x_{kl}}(\bfx)= \Tr \left( \rho_j 
\frac{\partial}{\partial x_{kl}} f_i(\bfx)\right)=\Tr \left( 
\rho_j\ome_l\frac{\partial f_i}{\partial x_k}(\bfx)\right),
\end{equation}    
and the statement follows from Lemma \ref{linalg} with $q=ns$ and 
$c_{ik}=\partial f_i/\partial x_k$.

\subsection{Exponential sums} \label{es}
For $1\leq j\leq s$ we define $\calB_j$ to be the set
\begin{equation*}
\calB_j=\{ \bfx_j\in V^n: |u_{ik}-x_{ik}|\leq \kappa \mbox{ for } 
n(j-1)+1\leq i\leq nj \mbox{ and } 1\leq k\leq m\}.
\end{equation*}
Then we have
\begin{equation*}
\calB = \calB_1\times \ldots\times \calB_s.
\end{equation*}
Now we introduce the exponential sums
\begin{equation*}
S_j(\bfbet)=\sum_{\bfx_j\in(P\calB_j)\cap \grn^n}e(\Tr(\bfbet N(\bfx_j +\bfd_j))),
\quad\quad 1\leq j\leq s,
\end{equation*}
where we write $\bfbet=\bet_1\rho_1+\ldots+\bet_m\rho_m$, and identify 
$\bfbet$ with the vector $(\bet_1,\ldots,\bet_m)\in \R^m$. 
Consider the linear forms 
\begin{equation*}
\bflam_j = \sum_{i=1}^r b_{ij}\bfalp_i,\quad\quad 1\leq j\leq s.
\end{equation*}
For $\bflam_j$ and $\bfalp_i$ we use the same conventions as for 
$\bfbet$. By orthogonality we have
\begin{equation}\label{eqn3.1}
N(\calB,P)=\int_{[0,1]^{mr}} S_1(\bflam_1)\ldots S_s(\bflam_s)\d\bfalp,
\end{equation}
where we write $\bfalp=(\bfalp_1,\ldots,\bfalp_r)$ and 
$\d\bfalp=\d\alp_{11}\ldots\d\alp_{rm}$.\par

Next we turn towards a form of Weyl's inequality for the exponential 
sums $S_j(\bfbet)$, which we deduce from Birch's work \cite{Bir1961}. 

\begin{lemma}\label{lem3.1}
Let $\Del$ and $\tet$ be positive integers satisfying
$2^{n-1}\Del<\tet$. Let $j$ be an integer such that $1\leq j\leq s$.
Then 

{\rm (i)} either we have $|S_j(\bfbet)|\ll P^{mn-\Del}$, or

{\rm (ii)} there is an integer $q$ such that $1\leq q\leq P^{m(n-1)\tet}$,
and integers $a_1,\ldots,a_m$ such that $\gcd (a_1,\ldots,a_m,q)=1$ and 
\begin{equation*}
2|q\bet_i - a_i|\leq P^{-n+m(n-1)\tet}, \quad\quad 1\leq i\leq m.
\end{equation*}
\end{lemma}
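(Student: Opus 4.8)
The plan is to reduce the exponential sum $S_j(\bfbet)$ over the box $(P\calB_j)\cap\grn^n$ to a shape to which Birch's Weyl-type estimate \cite{Bir1961} applies directly, and then unravel the $F$-rational Weyl bound back into the $\R$-language of the $\bet_i$. First I would expand $N(\bfx_j+\bfd_j)$ as a polynomial: writing $\bfx_j = (x_{n(j-1)+1},\ldots,x_{nj})$ with each coordinate $x_i = \sum_{l=1}^m x_{il}\ome_l$ in terms of the $\Z$-basis $\ome_1,\ldots,\ome_m$ of $\grn$, the form $\Tr(\bfbet N(\bfx_j+\bfd_j))$ becomes a homogeneous polynomial of degree $n$ (plus lower-order terms coming from $\bfd_j$) in the $nm$ integer variables $x_{il}$, with real coefficients that are explicit $\R$-linear combinations $\sum_{k=1}^m \bet_k c_k$ of $\bet_1,\ldots,\bet_m$, where each $c_k$ is of the form $\Tr(\rho_k\cdot(\text{monomial in the }\ome_l,\ \xi_p))$. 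Since the box $P\calB_j$ has all sidelengths comparable to $P$ (up to the fixed constant $\kappa$ and the fixed change of basis), the sum $S_j(\bfbet)$ is precisely an exponential sum of the type handled in \cite{Bir1961} for a single form of degree $n$ in $N := nm$ variables over a box of size $P$.

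Next I would apply Birch's lemma (the Weyl inequality in \cite{Bir1961}, or equivalently the version in \cite{Bir1961b}/\cite{BDL1962}) to this single degree-$n$ form in $N$ variables. That result states: for parameters related as in the statement, either $|S_j(\bfbet)| \ll P^{N - \text{(gain)}}$, or the coefficients of the $n$-th degree part admit a simultaneous rational approximation with a small common denominator. One has to match Birch's normalisation to the exponents in the statement: the box-size is $P$, the number of variables is $N = mn$, the degree is $n$, and the hypothesis $2^{n-1}\Del < \tet$ is exactly Birch's condition ensuring the dichotomy. In the minor-arc case this gives $|S_j(\bfbet)| \ll P^{mn-\Del}$, which is alternative (i). In the major-arc case one obtains an integer $q$ with $1\le q \le P^{\,c}$ and integers approximating $q$ times each coefficient of the leading form, within $P^{-n+c'}$, for appropriate exponents $c,c'$ proportional to $(n-1)\tet$ and multiplied by $m$ because each coefficient is itself a combination of the $\bet_k$.

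The genuinely technical step — and the one I expect to be the main obstacle — is the last one: converting the statement "the coefficients of the leading form are simultaneously well-approximated by rationals with denominator $\le P^{\text{something}}$" into the clean statement "$2|q\bet_i - a_i| \le P^{-n+m(n-1)\tet}$ for $1\le i\le m$". Each coefficient of the leading form is an $\R$-linear combination $\sum_k \bet_k \Tr(\rho_k \Xi)$ for various algebraic integers $\Xi$ built from products of the $\ome_l$ and the $\xi_p$; the map sending $(\bet_1,\ldots,\bet_m)$ to the vector of these coefficients is an $\R$-linear map given by a matrix whose entries are fixed rationals (in fact, determined by traces of products of a $\Z$-basis of $\grn$ and a $\Z$-basis of $\calC$). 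The key point is that this matrix is invertible over $\Q$ — this is exactly the non-degeneracy of the trace pairing, in the spirit of Lemma \ref{linalg} — so a good rational approximation to the coefficient vector pulls back to a good rational approximation to $(\bet_1,\ldots,\bet_m)$, at the cost of clearing a fixed denominator (absorbed into $q$ and into the implied constants) and a fixed loss in the exponents (absorbed by choosing the constants in the exponents $m(n-1)\tet$ generously, and by the freedom in the factor $2$). I would carry out this linear-algebra bookkeeping carefully, noting that the gcd condition $\gcd(a_1,\ldots,a_m,q)=1$ is arranged by dividing through, and that the upper bound $q\le P^{m(n-1)\tet}$ follows from Birch's bound on the denominator after multiplying by the fixed index of the relevant lattices. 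The remaining verifications — that the contribution of the lower-order ($\bfd_j$-dependent) terms does not affect the Weyl estimate, since only the leading form matters for Birch's bound — are routine.
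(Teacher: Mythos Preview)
Your plan has a genuine gap in how it invokes Birch \cite{Bir1961}, and it omits the one geometric step that actually drives the argument.

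First, Birch's Weyl inequality in \cite{Bir1961} is not a statement about a single form with arbitrary real coefficients whose conclusion approximates ``each coefficient of the leading form''. It is a statement about a \emph{system} of $R$ forms $F_1,\ldots,F_R$ of degree $d$ with integer (or rational) coefficients, applied to the exponential sum $\sum e(\alpha_1 F_1+\cdots+\alpha_R F_R)$; in the major-arc alternative it approximates the multipliers $\alpha_1,\ldots,\alpha_R$, and the exponents $P^{R(d-1)\tet}$ depend on $R$. If you treat $\Tr(\bfbet N(\bfz+\bfd))$ as a single form with multiplier $1$, Birch tells you nothing. If you decompose it into monomials and treat the monomial coefficients as the multipliers, $R$ becomes enormous and the exponents are wrong. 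The correct decomposition---and this is what the paper does---is
\[
\Tr(\bfbet N(\bfz+\bfd))=\sum_{i=1}^m \bet_i\,\Tr(\rho_i N(\bfz+\bfd)),
\]
so that one has a system of exactly $R=m$ forms $F_i=\Tr(\rho_i N)$ of degree $n$ in $mn$ variables, with the $\bet_i$ themselves as the multipliers. Birch then gives the approximations $2|q\bet_i-a_i|\le P^{-n+m(n-1)\tet}$ and $q\le P^{m(n-1)\tet}$ directly, with no linear-algebra inversion needed at all. Your entire third paragraph is therefore unnecessary once the setup is right.

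Second, and more seriously, you treat Birch's dichotomy as two-way. It is three-way: alternatives (i) and (ii) of the lemma, \emph{or} a lower bound on the dimension of the ``Birch singular locus'' $Y\subset\C^{mn}$ where the Jacobian matrix $\bigl(\partial F_i/\partial z_{kl}\bigr)$ has rank $<m$. Ruling out this third alternative is the heart of the proof and is entirely missing from your plan. The paper uses Lemma~\ref{linalg} and (\ref{eqn5.0}) to identify $Y$ with the locus where $\partial N/\partial z_k=0$ for all $k$, and then Euler's identity $nN(\bfz)=\sum_k z_k\,\partial N/\partial z_k$ forces $N\equiv 0$ on $Y$, so $\dim Y\le mn-1$. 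Combined with the hypothesis $2^{n-1}\Del<\tet$, this makes the third alternative $(n-2)mn+\dim Y\ge (n-1)mn-2^{n-1}\Del/\tet-\eps$ impossible. Without this step the lemma does not follow.
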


\begin{proof}
Since $j$ is fixed, we drop it from notation. 
Recall that $\bfz= (z_1,\ldots, z_n)$, where $z_k\in V$, so 
we can write $z_k= z_{k1}\ome_1+\ldots +z_{km}\ome_m$.
Let $Y\subset \C^{mn}$ be the Zariski closed subset given by 
\begin{equation}\label{eqn3.2}
{\rm rk}\left(\frac{\partial\Tr(\rho_i N(\bfz))}{\partial z_{kl}}\right)_{i,(k,l)}<m,
\end{equation}
where $1\leq i\leq m$, and the pairs $(k,l)$, 
where $1\leq k\leq n$ and $1\leq l\leq m$, are ordered
lexicographically as in the previous section. 
By Lemma \ref{linalg} and (\ref{eqn5.0}) this is equivalent to 
\begin{equation*}
\frac{\partial N(\bfz)}{\partial z_{k}}=0, \quad\quad 1\leq k\leq n.
\end{equation*}
However, by Euler's formula for homogeneous polynomials we have
\begin{equation*}
n N(\bfz)=\sum_{k=1}^n z_{k} \frac{\partial N(\bfz)}{\partial z_{k}}.
\end{equation*}
Since $N(\bfz)$ is not identically zero, we see that $\dim(Y)\leq mn-1$.\par
We have
\begin{equation*}
S_1(\bfbet)=\sum_{\bfz\in (P\calB_1)\cap\grn^n}e\left(\sum_{i=1}^m 
\bet_i \Tr( \rho_i N(\bfz+\bfd))\right).
\end{equation*}
Applying \cite[Lemmas 3.2 and 3.3]{Bir1961} to the system of equations
$\Tr(\rho_iN(\bfz+\bfd))=0$ in $mn$ variables $z_{kl}$
we obtain that either one of the alternatives of our lemma holds, 
or we have
\begin{equation*}
(n-2)mn+ \dim(Y)\geq (n-1)mn -2^{n-1}\Del/\tet -\eps,
\end{equation*}
for some $\eps > 0$. This is impossible since $2^{n-1}\Del<\tet$.
\end{proof}

In the following we always choose $\Del >0$ small enough so that 
the condition of Lemma \ref{lem3.1} is satisfied.

\subsection{The circle method}
We start this section with choosing appropriate major and minor arcs. For
a positive real number 
$\tet$, integers $q$ and $a_{11},\ldots,a_{rm}$ define the 
major arc
$\grM_{\bfa,q}(\tet)$ to be the set of $\bfalp\in [0,1]^{mr}$ such that 
\begin{equation*}
|q\alp_{ij}-a_{ij}|\leq q P^{-n+mr(n-1)\tet}, \quad\quad 1\leq i\leq r, \quad
1\leq j\leq m.
\end{equation*}
Then $\grM (\tet)$ is the union of the major arcs
\begin{equation*}
\grM (\tet)= \bigcup_{1\leq q\leq P^{mr(n-1)\tet}}\bigcup_{\bfa} \grM_{\bfa,q}(\tet),
\end{equation*}
where the second union is over all vectors $\bfa$ satisfying 
$\gcd (a_{11},\ldots,a_{rm},q)=1$ and $0\leq a_{ij}< q$. 
We choose $\tet$ sufficiently small such that all the major arcs in 
the union of $\grM (\tet)$ are disjoint, which is possible by 
\cite[Lemma 4.1]{Bir1961}. As usual, we define the minor arcs 
$\grm (\tet)$ as the complement 
$\grm (\tet)= [0,1]^{mr}\setminus \grM(\tet)$ to the major arcs.\par

Let us now treat the contribution of the minor arcs to 
the integral (\ref{eqn3.1}). For this we need the following lemma, 
which appeared in a similar way in the work of Birch, Davenport and 
Lewis \cite{BDL1962}. 

\begin{lemma}\label{lem4.1}
For any $\eps >0$ we have
\begin{equation*}
\int_{[0,1]^m}|S_j(\bfbet)|^2 \d\bfbet \ll P^{mn+\eps},\quad \quad 
1\leq j\leq s.
\end{equation*}
\end{lemma}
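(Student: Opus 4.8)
The plan is to interpret the integral as a counting function and then bound the number of solutions to a single norm equation in a box. First I would observe that, by orthogonality applied to the exponential sum $S_j(\bfbet)=\sum_{\bfx_j\in(P\calB_j)\cap\grn^n}e(\Tr(\bfbet N(\bfx_j+\bfd_j)))$ (in the $m$ real variables $\bet_1,\ldots,\bet_m$ coming from $\bfbet=\bet_1\rho_1+\ldots+\bet_m\rho_m$), one has
\begin{equation*}
\int_{[0,1]^m}|S_j(\bfbet)|^2\d\bfbet
= \#\{(\bfx_j,\bfx_j')\in ((P\calB_j)\cap\grn^n)^2 : N(\bfx_j+\bfd_j)=N(\bfx_j'+\bfd_j)\},
\end{equation*}
because $\Tr(\rho_k\cdot)$ for $k=1,\ldots,m$ separates elements of $\calO_F$ (indeed of $\grn$), so the vanishing of all $m$ trace-components of $N(\bfx_j+\bfd_j)-N(\bfx_j'+\bfd_j)$ is equivalent to the vanishing of this element of $\calO_F$ itself. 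Thus it suffices to show that this count is $O(P^{mn+\eps})$.

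Next I would estimate the number of such pairs. Fix $\bfx_j'$; there are $O(P^{mn})$ choices since $P\calB_j$ is a box of side $\asymp P$ in $V^n\cong\R^{mn}$. Having fixed $\bfx_j'$, set $a=N(\bfx_j'+\bfd_j)\in\calO_F$. If $a\neq 0$, then $\bfx_j+\bfd_j$ is a point of $\calO_K$ (after expanding in the basis $\xi_1,\ldots,\xi_n$) of norm $a$ lying in a box of side $\asymp P$, hence of bounded height; the number of algebraic integers of $K$ of norm equal to a fixed nonzero $a$ is $O(\abs{N_{F/\Q}(a)}^\eps)=O(P^\eps)$ by the standard divisor-type bound (the number of ideals of $\calO_K$ of given norm is $O(\text{norm}^\eps)$, and each ideal has $O(1)$ generators up to units — one must be slightly careful here because the unit group of $\calO_K$ is infinite, but the box constraint forces the representative to lie in a fixed compact set, bounding the number of admissible units; this is exactly the point where Birch--Davenport--Lewis \cite{BDL1962} argue). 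If $a=0$, then $N(\bfx_j'+\bfd_j)=0$ as well, and the set $N(\bfz)=0$ is a proper Zariski-closed subset of $V^n$ of real dimension $\le mn-1$ (as already used in the proof of Lemma \ref{lem3.1} via Euler's formula), so it contains $O(P^{mn-1})$ lattice points in $P\calB_j$; combined with the $O(P^{mn})$ choices for $\bfx_j'$ this contributes $O(P^{2mn-1})$, which is absorbed. Summing the main case over the $O(P^{mn})$ choices of $\bfx_j'$ gives $O(P^{mn+\eps})$.

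The main obstacle is the handling of units in the count of integers of $K$ of a fixed nonzero norm: naively there are infinitely many associates, so one genuinely needs the box restriction $\bfx_j+\bfd_j\in P\calB_j$ (a fixed compact region dilated by $P$) to cut this down to $O(P^\eps)$ — one bounds the number of units $u\in\calO_K^*$ such that $u\cdot\theta$ stays in the dilated box for a fixed $\theta$ by a volume/lattice argument in the logarithmic embedding, getting $O(P^\eps)$ (in fact $O((\log P)^{r_K})$ for the relevant unit rank $r_K$, which is $O(P^\eps)$). This is precisely the mechanism used in \cite{BDL1962}, and I would cite it; apart from that the argument is a routine divisor-bound estimate.
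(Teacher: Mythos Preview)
Your overall strategy is exactly the paper's: orthogonality turns the integral into a count of pairs with equal norm, and then one bounds the number of integers of $K$ with a fixed nonzero norm lying in a box by $O(P^{\eps})$, using the divisor bound for ideals together with the logarithmic embedding to control the number of admissible units. The paper writes this as $\sum_{|u|\ll P^{mn}} r(u)^2$ and proves $r(u)\ll P^\eps$; your formulation ``fix $\bfx_j'$, then count $\bfx_j$'' is the same thing.

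There is, however, a genuine gap in your handling of the case $a=0$. You assert that this case contributes $O(P^{2mn-1})$ and that this is ``absorbed'', but the target bound is $O(P^{mn+\eps})$, and for $mn\ge 2$ one has $2mn-1>mn+\eps$; so nothing is absorbed. Even if you had used the sharper count $O(P^{mn-1})$ for \emph{both} $\bfx_j$ and $\bfx_j'$ (since both must lie on the norm-zero locus), the resulting $O(P^{2mn-2})$ is still too large. The dimension argument you invoke from Lemma~\ref{lem3.1} concerns the singular locus of the Jacobian, not the norm-zero locus, and in any case a lattice-point count on a hypersurface is not enough here.

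The correct fix is much simpler. The vectors $\bfx_j,\bfx_j'$ lie in $\grn^n\subset F^n$, so $z=\sum_k(x_{j,k}+d_{j,k})\xi_k$ is an element of the \emph{field} $K$, and for $z\in K$ one has $N_{K/F}(z)=0$ if and only if $z=0$. Hence $a=0$ forces $\bfx_j'+\bfd_j$ (and likewise $\bfx_j+\bfd_j$) to correspond to the zero element of $K$, which pins down at most one lattice point each; the total contribution of this case is $O(1)$. With this correction your argument is complete and matches the paper's.
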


\begin{proof}
By orthogonality we see that this integral is equal to the number of 
solutions $\bfz_1,\bfz_2\in (P\calB_j)\cap\grn^n$ of the equation
\begin{equation*}
N(\bfz_1+\bfd)=N(\bfz_2+\bfd).
\end{equation*}
Write $N_{K/\Q}$ for the norm from $K$ to $\Q$. 
For $z\in K$ we denote by $z^{(l)}$, where $1\leq l\leq mn$, the 
conjugates of $z$. By the transitivity of 
norm, the number above is bounded by the number of solutions 
$z_1,z_2\in \calO_K$ of
\begin{equation*}
N_{K/\Q}(z_1)=N_{K/\Q}(z_2),\ \ 
\max_{1\leq l\leq mn}|z_1^{(l)}|\leq C_1 P, \ \  
\max_{1\leq l\leq mn}|z_2^{(l)}|\leq C_1 P,
\end{equation*}
for some constant $C_1$. For an integer $u$ let $r(u)$ be
the number of $z\in \calO_K$ such that
\begin{equation}
N_{K/\Q}(z)=u, \quad\quad
\max_{1\leq l\leq mn}|z^{(l)}|\leq C_1P.
\label{*}
\end{equation}
Now the integral in the lemma is bounded by 
\begin{equation*}
\sum_{|u|\leq C_2P^{mn}}r(u)^2,
\end{equation*}
for some large enough $C_2$. To prove the lemma it is enough
to show that for $u$ in this sum we have $r(u)\ll P^\eps$. One can find this
result in Lemma 4.3 in \cite{Ple75}. For convenience, we repeat a proof here.
Group together the solutions $z$ of (\ref{*}) that generate the same
principal ideal $(z)$. Let us denote the norm of an integral ideal
$\gra$ by $\Nm(\gra)$. By the unique factorisation of prime ideals
the number of integral ideals $\gra$ of norm $\Nm(\gra)=u>0$ is
bounded by some constant times $u^\eps$. 
Now we fix a solution $z$ of (\ref{*}), if it exists, and
consider the number $A(u,z)$ of solutions $\tilde{z}$ of (\ref{*})
such that $z$ and $\tilde{z}$ differ by a unit. Let
$\iot_1,\ldots,\iot_T$ be fundamental units of $K$. For some integers
$v_1,\ldots,v_T$ we have
\begin{equation}\label{eqn4.1}
\tilde{z}=\zet \iot_1^{v_1}\ldots\iot_T^{v_T}z,
\end{equation}
where $\zet$ is a root of unity. Furthermore, we have
\begin{equation*}
\sum_{l=1}^{mn}\log |\tilde{z}^{(l)}|=\log |u|\ll \log P,
\end{equation*}
and $\log |\tilde{z}^{(l)}|\leq \log (C_1P)$ for all $l$. Therefore, 
there is a
constant $C_3$ such that for large values of $P$ we have the bound
\begin{equation*}
\big|\log |\tilde{z}^{(l)}|\big|\leq C_3\log P, \quad\quad 1\leq l\leq mn.
\end{equation*}
The same estimate is true for $|\log|z^{(l)}||$. 
Using (\ref{eqn4.1}) we see that
\begin{equation*}
\left|\sum_{i=1}^Tv_i\log |\iot_i^{(l)}|\right|\ll \log P, 
\quad\quad 1\leq l\leq mn.
\end{equation*}
By Dirichlet's unit theorem the rank of
the matrix $(\iot_i^{(l)})$, where $1\leq i\leq T$ and $1\leq l\leq mn$,
is $T$, and hence $A(u,z)\ll (\log P)^T$. 
This implies $r(u)\ll P^\eps$. 
\end{proof}

\begin{lemma}\label{lem4.2}
There exists $\eta >0$ such that we have 
\begin{equation*}
\int_{\grm(\tet)}|S_1(\bflam_1)\ldots S_s(\bflam_s)|\d\bfalp = O(P^{mn(r+1)-\eta}).
\end{equation*}
\end{lemma}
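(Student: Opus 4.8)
The strategy is the standard minor-arc estimate for the circle method: bound the full product $|S_1(\bflam_1)\cdots S_s(\bflam_s)|$ on the minor arcs by extracting one factor in sup-norm, using Weyl's inequality (Lemma \ref{lem3.1}) to control it, and handling the remaining factors by the second-moment bound (Lemma \ref{lem4.1}). Because the linear forms $\bflam_j=\sum_i b_{ij}\bfalp_i$ depend on $\bfalp$ through the matrix $B$, the key point is that Condition II lets us select, \emph{for each point of the minor arcs}, a subset of the $\bflam_j$ on which we have good pointwise control.

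First I would argue that if $\bfalp\in\grm(\tet)$, then there exists at least one index $j$ for which $\bflam_j=\bflam_j(\bfalp)$ satisfies alternative (i) of Lemma \ref{lem3.1}, i.e. $|S_j(\bflam_j)|\ll P^{mn-\Del}$. Suppose not: then for \emph{every} $j$, alternative (ii) holds, giving integers $q_j\le P^{m(n-1)\tet}$ and $\bfa^{(j)}$ with $2|q_j\lam_{jk}-a^{(j)}_k|\le P^{-n+m(n-1)\tet}$ for all $k$. Setting $q=\mathrm{lcm}(q_1,\ldots,q_s)$ one gets a simultaneous rational approximation to all the $\lam_{jk}$, hence (since $q\le P^{s m(n-1)\tet}$ and the $\lam_{jk}$ are fixed integer combinations of the $\alp_{ik}$) a simultaneous rational approximation to the $\alp_{ik}$ themselves with denominator bounded by a fixed power of $P^{\tet}$; this places $\bfalp$ in $\grM(\tet)$ after adjusting $\tet$ by a harmless constant factor, contradicting $\bfalp\in\grm(\tet)$. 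Here is where Condition II enters: to pass from good approximation of the $\lam_{jk}$ back to the $\alp_{ik}$ one needs that $r$ of the forms $\bflam_j$ are linearly independent, which Condition II guarantees (remove any column and the remaining $2r$ columns split into two invertible $r\times r$ blocks, so in particular $B$ has rank $r$ and one can solve for $\bfalp$). I would actually want more: a uniform choice. The cleanest route is to cover $\grm(\tet)$ by finitely many regions $\grm_A$ indexed by the $\binom{s}{r}$ subsets $A$ of columns of full rank; on $\grm_A$ one can solve for $\bfalp$ in terms of $(\bflam_j)_{j\in A}$, so if all $j\in A$ fell in case (ii) we would again land in $\grM(c\tet)$. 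Thus on each $\grm_A$ there is some $j\notin A$... more carefully: since $|A|=r$ and failure of (i) for all $j\in A$ forces the major-arc conclusion, on $\grm_A$ there is at least one $j_0\in A$ with $|S_{j_0}(\bflam_{j_0})|\ll P^{mn-\Del}$.

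Given such a $j_0$, I would bound
\begin{equation*}
\int_{\grm_A}|S_1(\bflam_1)\cdots S_s(\bflam_s)|\,\d\bfalp
\le P^{mn-\Del}\int_{[0,1]^{mr}}\prod_{j\ne j_0}|S_j(\bflam_j)|\,\d\bfalp.
\end{equation*}
For the remaining integral, a change of variables on the $\bfalp$-torus turns the $r$ linear forms $(\bflam_j)_{j\in A\setminus\{j_0\}}$ together with enough of the original $\bfalp_i$ into independent coordinates, so that applying the trivial bound $|S_j|\le |P\calB_j\cap\grn^n|\ll P^{mn}$ to the $s-1-(r-1)=r$ forms not being integrated and Cauchy--Schwarz plus Lemma \ref{lem4.1} to the $r-1$ forms indexed by $A\setminus\{j_0\}$ (each contributing $\int|S_j|^2\ll P^{mn+\eps}$, hence $\ll P^{mn/2+\eps}$ after the square root, but more efficiently pairing them), one obtains a bound of the shape $P^{mn r+\eps}$ for the integral; combined with the factor $P^{mn-\Del}$ this gives $P^{mn(r+1)-\Del+\eps}$, and choosing $\eps<\Del$ and then $\eta$ slightly smaller than $\Del-\eps$ finishes the estimate on $\grm_A$. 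Summing over the finitely many $A$ preserves the bound. The main obstacle is the bookkeeping in this last step: organising the change of variables so that exactly $r$ exponential sums are estimated trivially and the other $r-1$ (plus the distinguished $j_0$) are absorbed by the mean-value estimate, while checking that Condition II makes the relevant sub-matrices invertible so the substitution is unimodular up to bounded factors. This is exactly the mechanism of \cite{BDL1962}, adapted here to the norm-form setting over $F$ via the reduction to $R_{F/\Q}$ and Lemma \ref{linalg}.
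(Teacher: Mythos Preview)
Your first step---showing that on $\grm(\tet)$ at least one $S_j(\bflam_j)$ satisfies the Weyl bound $|S_j|\ll P^{mn-\Del}$---is essentially the paper's argument, though the paper simply decomposes $\grm(\tet)=\bigcup_{j=1}^s\grm_j(\tet)$ with $\grm_j(\tet)=\{\bfalp\in\grm(\tet):|S_j(\bflam_j)|\ll P^{mn-\Del}\}$, and uses only $r$ of the approximations (any full-rank block of columns) to invert back to $\bfalp$. Your extra covering by subsets $A$ is unnecessary.

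The second step, however, has a genuine gap. After extracting $|S_{j_0}|\ll P^{mn-\Del}$ there remain $s-1=2r$ factors, and you need the integral of their product over $[0,1]^{mr}$ to be $\ll P^{mnr+\eps}$. Your plan---bound some factors trivially by $P^{mn}$ and use Lemma \ref{lem4.1} on the rest---cannot achieve this: if you bound $r$ factors trivially you already pay $P^{mnr}$, and any nontrivial contribution from the remaining $r$ factors (e.g.\ $\int|S_j|\le(\int|S_j|^2)^{1/2}\ll P^{mn/2+\eps}$ after a change of variables) pushes you well above $P^{mnr}$. Your arithmetic $s-1-(r-1)=r$ is also off (it equals $r+1$), and using only $r-1$ forms you cannot change variables on the $mr$-dimensional torus.

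The correct organisation, and the place where Condition II is actually used, is this: apply Cauchy--Schwarz to the full product of the $2r$ remaining factors to get $I_1^{1/2}I_2^{1/2}$ with
\[
I_1=\int_{[0,1]^{mr}}\prod_{j\in G_1}|S_j(\bflam_j)|^2\,\d\bfalp,\qquad
I_2=\int_{[0,1]^{mr}}\prod_{j\in G_2}|S_j(\bflam_j)|^2\,\d\bfalp,
\]
where $G_1,G_2$ is a partition of $\{1,\ldots,s\}\setminus\{j_0\}$ into two sets of size $r$. Condition II guarantees such a partition with both submatrices $(b_{ij})_{j\in G_k}$ of full rank; Lemma \ref{linalg} then makes the linear change of variables $\bfalp\mapsto(\bflam_j)_{j\in G_k}$ nonsingular over $\R$, and $1$-periodicity plus Lemma \ref{lem4.1} give $I_k\ll\prod_{j\in G_k}\int_{[0,1]^m}|S_j(\bfbet)|^2\,\d\bfbet\ll P^{mnr+\eps}$. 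Combined with the extracted factor this yields $P^{mn(r+1)-\Del+\eps}$, and taking $\eta<\Del$ finishes.
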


\begin{proof}
In the first part of the proof we show that if $\bfalp$ is of minor arc 
type, then so is one of the $\bflam_i$ for some possibly different 
parameter $\tet'$. For this let $\grm_j(\tet)$ be the set
of $\bfalp\in \grm(\tet)$ such that $|S_j(\bflam_j)|\ll P^{mn-\Delta}$. Assume
that $\bfalp\notin \cup_{j=1}^s \grm_j(\tet)$ and choose $\tet'<\tet$ 
such that we still have $2^{n-1}\Delta<\tet'$. Then we apply
Lemma \ref{lem3.1} and find integers $1\leq q_j \leq P^{m(n-1)\tet'}$ and
$a_{jl}$ for $1\leq j\leq s$ and $1\leq l\leq m$ with the property that 
\begin{equation*}
2|q_j\lam_{jl}-a_{jl}|\leq P^{-n+m(n-1)\tet'},
\end{equation*}
for all $j$ and $l$. For simplicity of notation we assume next that the matrix
$(b_{ij})_{1\leq i,j\leq r}$ has full rank, which is possible after renaming
indices since the matrix $B$ has full rank by assumption. Thus, there are
$c_{ij}\in k$ such that 
\begin{equation*}
\bfalp_i=\sum_{j=1}^rc_{ij}\bflam_j,
\end{equation*}
for $1\leq i\leq r$. Next we define 
$\widetilde{\bflam}_j=q_j^{-1}(a_{j1}\rho_1+\ldots+a_{jm}\rho_m)$,
and
\begin{equation*}
\tilde{a}_{ik}=\Tr \left(\zet_k\sum_{j=1}^rc_{ij}\widetilde{\bflam}_j\right),
\quad\quad 1\leq i\leq r, \quad 1\leq k\leq m.
\end{equation*}
By construction there is an integer
$q\ll P^{mr(n-1)\tet'}$ such that $q \tilde{a}_{ik}\in\Z$ for all $i$ and
$k$. We can estimate
$$
|\alp_{ik}-\tilde{a}_{ik}|=|\Tr \big(\zet_k\sum
c_{ij}(\bflam_j-\widetilde{\bflam}_j)\big)|
\ll \max_{j,l}\big(|q_j^{-1}a_{jl}-\lam_{jl}|\big)
\ll P^{-n+m(n-1)\tet'}.
$$
It follows that $\bfalp\in\grM (\tet)$, and hence
$\grm(\tet)=\cup_{j}\grm_j(\tet)$.\par
We estimate the contribution from the sets
$\grm_j(\tet)$ to the integral in the lemma separately. 
For simplicity of notation we assume that 
$|S_s(\bflam_s)|\ll P^{mn-\Delta}$ and that both the
first $r$ columns and the next $r$ columns of the matrix $B$ form submatrices
of full rank, which we can do without loss of generality 
by Condition II. Using the Cauchy--Schwarz inequality we estimate
\begin{align*}
\int_{\grm_s(\tet)}|S_1(\bflam_1)\ldots S_s(\bflam_s)|\d\bfalp&\ll
P^{mn-\Delta}\int_{\grm_s(\tet)}|S_1(\bflam_1)\ldots
S_{2r}(\bflam_{2r})|\d\bfalp\\
&\ll P^{mn-\Delta} I_1^{1/2}I_2^{1/2},
\end{align*}
where
\begin{equation*}
I_1=\int_{[0,1]^{mr}}|S_1(\bflam_1)\ldots S_r(\bflam_r)|^2\d\bfalp,
\end{equation*}
and $I_2$ of analogous form. We now perform a change of variables in the
integral $I_1$. For this note that
$$
\lam_{jl}=\Tr \left(\zet_l\sum_{i=1}^r b_{ij}\bfalp_i\right)
=\sum_{i=1}^r\sum_{k=1}^m \alp_{ik}\Tr (b_{ij}\rho_k\zet_l).$$
Order the pairs $(i,k)$ and $(j,l)$ lexicographically, and let 
$M$ be the $(mr\times mr)$-matrix with entries 
$\Tr (b_{ij}\rho_k\zet_l)$. Then $M$ has full rank and
$\Tr(b_{ij}\rho_k\zet_l)\in\Z$ for all $i,j,k$ and $l$. Write
$\d\bflam$ for the Lebesgue measure $\d\lam_{11}\ldots\d\lam_{rm}$. 
By 1-periodicity of our exponential sums we have
$$
I_1=\frac{1}{\det M}\int_{M[0,1]^{mr}}|S_1(\bflam_1)\ldots S_r(\bflam_r)|^2\d\bflam
\ll\int_{[0,1]^{mr}}|S_1(\bflam_1)\ldots S_r(\bflam_r)|^2\d\bflam.
$$
The integral $I_2$ can be treated in the very same way as $I_1$.
Our lemma now follows from Lemma \ref{lem4.1}.
\end{proof}

We analyse the major arcs following Birch's approach in \cite{Bir1961}.
Let us write
\begin{equation*}
S(\bfalp)=S_1(\bflam_1)\ldots S_s(\bflam_s)=
\sum_{\bfx\in (P\calB)\cap \grn^{ns}} e(\alp_{11} f_{11}(\bfx+\bfd)+\ldots
+\alp_{rm} f_{rm}(\bfx+\bfd)),
\end{equation*}
where the exponential sums $S_i(\bflam_i)$ were defined in
the beginning of Section \ref{es}. 
We define the exponential sums
\begin{equation*}
S_{\bfa,q}=\sum_{\bfx\in (\Z/q)^{mns}} e\left(\sum_{i=1}^r\sum_{j=1}^m a_{ij}f_{ij}(\bfx+\bfd)/q\right).
\end{equation*}
For $\bfgam\in \R^{mr}$ we define
\begin{equation*}
I(\bfgam)= \int_{\bft \in \calB}e\left(\sum_{i=1}^r\sum_{j=1}^m \gam_{ij}
  f_{ij}(\bft)\right) \d\bft, 
\quad \quad
J(P)=\int_{|\bfgam|\leq P}I(\bfgam)\d\bfgam.
\end{equation*}
In this last integral we use the notation
$|\bfgam|=\max_{ij}|\gam_{ij}|$. For the vector $\bft$ we use the 
same conventions as were adopted in the introduction for the vector $\bfx$.

\begin{lemma}\label{lem4.3}
For a small enough $\tet>0$ there exists $\eta > 0$ such that we have
\begin{equation*}
\int_{\grM(\tet)}S(\bfalp)\d\bfalp = \grS (P) J(P^{mr(n-1)\tet}) P^{mn(r+1)}+
O(P^{mn(r+1)-\eta}),
\end{equation*}
where
\begin{equation*}
\grS (P)= \sum_{q\leq P^{mr(n-1)\tet}}q^{-mns}\sum_\bfa S_{\bfa,q},
\end{equation*}
and $\bfa$ ranges over all vectors with $0\leq a_{ij} < q$ and 
$\gcd (a_{11},\ldots,a_{rm},q)=1$.
\end{lemma}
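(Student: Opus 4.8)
The plan is to carry out the classical major-arc dissection of Birch \cite{Bir1961}, transported to the number-field setting through the Weil restriction $R_{F/\Q}(Z)$, whose singular locus is controlled by the same rank hypothesis as $Z$ itself (Lemma \ref{linalg} and equation (\ref{eqn5.0})). Since the arcs in $\grM(\tet)$ are pairwise disjoint, we have
\[
\int_{\grM(\tet)}S(\bfalp)\d\bfalp=\sum_{q\le P^{mr(n-1)\tet}}\ \sum_{\bfa}\ \int_{|\beta_{ij}|\le P^{-n+mr(n-1)\tet}}S\!\left(\tfrac{\bfa}{q}+\bfbet\right)\d\bfbet,
\]
where $\bfa$ runs over $0\le a_{ij}<q$ with $\gcd(a_{11},\ldots,a_{rm},q)=1$ and $\bfalp=\bfa/q+\bfbet$. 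Working in the coordinates attached to the $\Z$-basis $\ome_1,\ldots,\ome_m$ of $\grn$ (so that each $f_{ij}$ has integer coefficients), I would split the sum over $\bfx\in(P\calB)\cap\grn^{ns}$ into residue classes modulo $q$: since $e\big(\sum_{i,j}a_{ij}f_{ij}(\bfx+\bfd)/q\big)$ depends only on $\bfx\bmod q$, this exhibits $S(\bfa/q+\bfbet)$ as $S_{\bfa,q}$ paired with the sum of $e\big(\sum_{i,j}\beta_{ij}f_{ij}(\bfx+\bfd)\big)$ over the lattice points of a fixed class in $P\calB$.

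The next step is to replace each such inner sum by an integral. On $P\calB$ the phase $\sum_{i,j}\beta_{ij}f_{ij}(\bft+\bfd)$ has gradient $O\big(|\bfbet|P^{n-1}\big)=O\big(P^{-1+mr(n-1)\tet}\big)$, a negative power of $P$ once $\tet$ is small, so the summand is slowly varying and the sum-to-integral comparison of \cite{Bir1961} gives $q^{-mns}\int_{P\calB}e\big(\sum_{i,j}\beta_{ij}f_{ij}(\bft+\bfd)\big)\d\bft$ up to a controlled error; summing over the $q^{mns}$ residues recovers the factor $S_{\bfa,q}$. Rescaling by $\bft=P\bfu$ and using that each $f_i$ is homogeneous of degree $n$, we get $f_{ij}(P\bfu+\bfd)=P^nf_{ij}(\bfu)+O(P^{n-1})$ uniformly for $\bfu\in\calB$; the $O(P^{n-1})$ term, weighted by $|\bfbet|$, perturbs the phase by $o(1)$ and is absorbed into the error — this is exactly where the simple shape imposed on $\calB$ in Theorem \ref{thm1.2} is convenient. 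Setting $\gamma_{ij}=P^n\beta_{ij}$, so $\d\bfbet=P^{-mrn}\d\bfgam$ and $|\gamma_{ij}|\le P^{mr(n-1)\tet}$, the $\bft$-integral becomes $P^{mns}I(\bfgam)$ plus a negligible quantity.

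Putting the pieces together, the main term is
\[
P^{mns-mrn}\Big(\sum_{q\le P^{mr(n-1)\tet}}q^{-mns}\sum_{\bfa}S_{\bfa,q}\Big)\Big(\int_{|\bfgam|\le P^{mr(n-1)\tet}}I(\bfgam)\d\bfgam\Big),
\]
and since $s=2r+1$ we have $mns-mrn=mn(s-r)=mn(r+1)$, so this is precisely $\grS(P)\,J(P^{mr(n-1)\tet})\,P^{mn(r+1)}$. It then remains to collect all error terms: the per-arc comparison error from \cite{Bir1961}, together with the homogenisation-and-shift error, summed over the $\ll P^{(mr+1)mr(n-1)\tet}$ pairs $(q,\bfa)$ and integrated over the $\bfbet$-box. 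This bookkeeping is the main obstacle: the trivial estimate $|S(\bfalp)|\le P^{mns}$ already makes $\int_{\grM(\tet)}|S|$ exceed $P^{mn(r+1)}$ by a small positive power of $P$, so the saving drawn from Birch's approximation must be a genuine power of $P$ that still survives after multiplication by the number of major arcs. Taking $\tet>0$ small enough in terms of $m$, $n$, $r$ (and compatibly with the constraints on $\tet$ already in force) makes $P^{-1+mr(n-1)\tet}$ and the total major-arc measure small enough for this to work, and one then extracts some $\eta>0$ with cumulative error $O(P^{mn(r+1)-\eta})$.
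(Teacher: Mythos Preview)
Your sketch is correct and follows the same route as the paper: the authors simply cite Lemmas~5.1 and~5.5 of \cite{Bir1961} for the major-arc approximation you describe, together with \cite[Section~9]{Schmidt1985} for the handling of the shift $\bfd$ (your phase-perturbation estimate $|\bfbet|\cdot O(P^{n-1})=O(P^{-1+mr(n-1)\tet})$). In other words, the paper's proof is a bare reference to exactly the argument you have unpacked.
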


\begin{proof}
This is a combination of Lemmas 5.1 and 5.5 of \cite{Bir1961}
together with the argument of \cite[Section 9]{Schmidt1985} which
ensures that the error introduced by replacing 
$f_{ij}(\bfx+\bfd)$ by $f_{ij}(\bfx)$ is small enough.
\end{proof}

\subsection{Singular Integral}

By assumption the system of equations $f_i(\bft)=0$ has no 
singularities in the box $\calB$. By Lemma \ref{linalg} and the remark 
following it, the corresponding system $f_{ij}=0$ is also 
non-singular on $\calB$. Splitting the box into smaller ones
if necessary we may assume that the same $(mr\times mr)$-minor of the Jacobian
matrix of the $f_{ij}$ has full rank on the whole box. For simplicity of
notation we assume furthermore that it is the minor $C$ given by
$\frac{\partial f_{ij}}{\partial t_{nk,l}}$ for $1\leq i,k\leq r$ and $1\leq
j,l\leq m$. Here again we order the pairs $(i,j)$ and $(k,l)$ 
lexicographically. After splitting the box $\calB$ into even smaller boxes,
so that on each of them the inverse function theorem becomes
applicable, we can perform a coordinate transformation in the integral
$I(\bfgam)$ introduced in the last section, as follows.
Set $u_{ij}=f_{ij}(\bft)$ for $1\leq i\leq r$ and $1\leq j\leq m$, 
and write $\bfu$ for the vector $(u_{11},\ldots,u_{rm})$. 
After renaming the indices of the vector $\bft$ we
can write $\bft = (\bft',\bft'')$ with $\bft' \in \R^{m(ns-r)}$ and 
$\bft''\in \R^{mr}$, so that $\bft''$ consists of all the coordinates 
of $\bft$ of the form $t_{nk,l}$ for $1\leq k\leq r$ and $1\leq l\leq m$.
Let $V(\bfu)$ be the set of all $\bft '\in \R^{m(ns-r)}$ such that 
there is some $\bft ''\in \R^{mr}$
with the corresponding $\bft = (\bft',\bft'') \in\calB$ and 
$u_{ij}=f_{ij}(\bft '',\bft ')$
for all $1\leq i\leq r$ and $1\leq j\leq m$. Define 
\begin{equation*}
\psi(\bfu)=\int_{\bft '\in V(\bfu)} |\det\, C (\bft)|^{-1} \d\bft ',
\end{equation*}
where $\bft$ is implicitly given by $\bfu$ and $\bft '$. Then we obtain 
\begin{equation*}
I(\bfgam)=\int_{\R^{rm}} \psi(\bfu)e(\bfgam \cdot \bfu)\d\bfu,
\end{equation*}
where we write $\bfgam\cdot\bfu$ for the scalar product
$\sum_{i=1}^r\sum_{j=1}^m \gam_{ij}u_{ij}$. 

Our next goal is to show,
using the Fourier inversion theorem, that $J(P)$ absolutely 
converges to $\psi(0)$ when $P\to \infty$. 
First we need a lemma.

\begin{lemma}\label{lem5.1b}
Let $\calA$ be a rectangular box in $\R^D$.
For $1\leq i\leq m$ let $F_i(\bfz)\in \R[\bfz]$ be polynomials with real coefficients
in $\bfz = (z_1,\ldots,z_D)$. Let $l$ be an integer such that $0\leq l\leq D-m$.
Assume that all $(m\times m)$-minors of the matrix
\begin{equation*}
\left( \frac{\partial F_i}{\partial z_j}\right)_{1\leq i\leq m,\ 1\leq j\leq m+l}
\end{equation*}
have full rank on some open subset $\calU\supset \calA$. 
Let $G:\calU\to \R$ be a smooth function. Then for any $\bet_1,\ldots,\bet_m\in\R$ one has
\begin{equation*}
\left| \int_{\calA} G(\bfz) e(\bet_1 F_1(\bfz)+\ldots + \bet_m F_m(\bfz)) \d z_1\ldots \d z_D\right| \ll (\max_i|\bet_i|)^{-l-1},
\end{equation*}
where the implied constant depends only on $\calA$ and the functions $F_i$ and $G$. 
\end{lemma}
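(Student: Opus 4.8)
The plan is to prove this by induction on $l$, using integration by parts to trade one power of $\max_i|\beta_i|$ for one unit of ``leftover'' derivative at each step. First I would set $\beta = \max_i|\beta_i|$ and assume WLOG that $\beta = |\beta_1|$ (relabelling the $F_i$ if necessary), so that the phase $\beta_1 F_1 + \ldots + \beta_m F_m$ genuinely oscillates at rate $\gtrsim \beta$ in some direction. For the base case $l=0$, the hypothesis says the $(m\times m)$ Jacobian of $(F_1,\ldots,F_m)$ with respect to $(z_1,\ldots,z_m)$ is invertible on $\calU$; after shrinking and covering $\calA$ by finitely many boxes I may assume it is invertible with bounded inverse on a neighbourhood of each, and then change variables $w_i = F_i(\bfz)$ for $1\le i\le m$, keeping $z_{m+1},\ldots,z_D$ as the remaining coordinates. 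In the new coordinates the integral becomes $\int \widetilde G(\bfw, \bfz'') e(\beta_1 w_1 + \ldots + \beta_m w_m)\,d\bfw\,d\bfz''$ over a bounded region with a smooth (hence $C^1$) amplitude $\widetilde G$ of compact support in $\bfw$; integrating by parts once in $w_1$ gives the bound $\ll |\beta_1|^{-1} = \beta^{-1}$, which is the $l=0$ case.

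For the inductive step, suppose the statement holds for $l-1$ and that all $(m\times m)$-minors of $(\partial F_i/\partial z_j)_{1\le i\le m,\,1\le j\le m+l}$ are nonvanishing on $\calU\supset\calA$. In particular the minor using columns $1,\ldots,m$ is invertible, so as above I change variables $w_i = F_i(\bfz)$, $1\le i\le m$, with the remaining coordinates $z_{m+1},\ldots,z_D$ unchanged; crucially the column $z_{m+1}$ still appears, and one checks that in the new coordinates $\partial w_i/\partial z_{m+1}$ — which up to the invertible Jacobian factor records the old $(m\times m)$-minors using columns $\{1,\ldots,\widehat{k},\ldots,m, m+1\}$ — are not all zero; hence after a further partition and relabelling I may assume $\partial/\partial z_{m+1}$ is a genuine coordinate direction along which the phase $\psi := \beta_1 w_1 + \ldots + \beta_m w_m$ has the property that $\partial\psi/\partial w_1 = \beta_1$ is bounded away from $0$. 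The trick is then: write $e(\psi) = (2\pi i\beta_1)^{-1}\,\partial_{w_1} e(\psi)$ and integrate by parts in $w_1$. This produces $|\beta_1|^{-1} = \beta^{-1}$ times an integral of the same shape, $\int \widetilde G_1\, e(\psi)$, but where $\widetilde G_1 = \partial_{w_1}\widetilde G$ is again smooth of compact support — and, more to the point, I have one more column of nonvanishing minors to play with than I needed, so the remaining integral falls under the inductive hypothesis with parameter $l-1$, giving $\ll\beta^{-(l-1)-1} = \beta^{-l}$. Multiplying the two bounds yields $\ll\beta^{-l-1}$, completing the induction.

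The step I expect to be the main obstacle is the bookkeeping in the inductive step: verifying that after the change of variables $w_i = F_i(\bfz)$ the transformed functions still satisfy a minors hypothesis with parameter $l-1$ in the remaining $D-1$ variables $(w_2,\ldots,w_m,z_{m+1},\ldots,z_D)$ — rather than in all $D$. One clean way around this is to phrase the induction so that at each stage I only need \emph{one} nonvanishing $(m\times m)$-minor to perform the change of variables and a \emph{second} batch (involving one further column) to feed the inductive hypothesis; concretely, I would not change variables at all and instead argue directly: pick a coordinate direction $z_{j_0}$, $1\le j_0\le m+l$, and a minor $M$ using columns $S\cup\{j_0\}$ with $|S| = m-1$; express $e(\psi)$ as a linear combination of $\partial_{z_{j_0}}e(\psi), \partial_{z_s}e(\psi)$ ($s\in S$) divided by the (nonvanishing) quantity $\sum\beta_i\,(\partial F_i/\partial z_{j_0})$-type Wronskian, integrate by parts once, and collect terms — each resulting integral has an extra derivative on $G$ but one fewer ``free'' column, so the inductive hypothesis at level $l-1$ applies. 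I would also need the standard compactness reduction (cover $\calA$ by finitely many small boxes on each of which one fixed minor is bounded below and the change-of-variables map is a diffeomorphism with bounded derivatives up to the needed order) to control the implied constants uniformly; this is routine but should be stated. Finally, since $G$ is merely assumed smooth, all the boundary terms in the integrations by parts vanish because $G$ — or rather a smooth cutoff extending $G|_{\calA}$ to a compactly supported function on $\calU$, after slightly enlarging $\calA$ inside $\calU$ — has compact support, so no boundary contributions survive; this point should be made explicitly to keep the argument honest.
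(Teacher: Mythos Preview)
Your overall strategy --- iterate integration by parts $l+1$ times, gaining one power of $|\bet_1|^{-1}$ each time (WLOG $|\bet_1|=\max_i|\bet_i|$) --- is correct, and your alternative sketch at the end (write $e(\psi)$ as a combination of $\partial_{z_j}e(\psi)$ and integrate by parts) is essentially the paper's method. But your handling of boundary terms is a genuine gap. The integral is over a box $\calA$ with sharp edges, and $G$ is merely smooth on a neighbourhood --- in the paper's applications one takes $G\equiv 1$. Replacing $G$ by a compactly supported cutoff changes the value of the integral by an amount that is not $O(|\bet_1|^{-(l+1)})$, so you cannot make boundary contributions vanish that way. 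Indeed, if they did vanish, iterating on the volume term alone would give $O(|\bet_1|^{-N})$ for every $N$, which is already false for $m=1$, $F_1(z)=z$, $G=1$ on $[0,1]$. The boundary terms carry the obstruction and must be retained.

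The paper stays in the original coordinates (no change of variables $w_i=F_i$) and applies Stokes' theorem on the box. One chooses smooth $\phi_1,\ldots,\phi_m$, by inverting the first $m\times m$ Jacobian block, so that the $(D-1)$-form $\mu=\sum_{i=1}^m G\phi_i\,e(\bfbet\bfF)\,\d z_1\wedge\cdots\wedge\widehat{\d z_i}\wedge\cdots\wedge\d z_D$ satisfies $d\mu=\ome_1+\bet_1\ome$, where $\ome$ is the original integrand viewed as a $D$-form and $\ome_1$ has the same shape with a new amplitude. Stokes gives $\int_\calA\ome=\bet_1^{-1}\big(\int_{\partial\calA}\mu-\int_\calA\ome_1\big)$. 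The boundary piece lives on the faces $\{z_i=\text{const}\}$, $1\le i\le m$; on each such face one column is lost, so the hypothesis holds there with $l$ replaced by $l-1$. The volume piece $\int_\calA\ome_1$ is at the \emph{same} level $l$, contrary to your claim that it has ``one fewer free column''. Iterating $l$ more times and then bounding every surviving integral trivially yields the desired $\ll|\bet_1|^{-(l+1)}$. Working in the original coordinates is precisely what dissolves your bookkeeping worry: the domain is a box at every stage, and the minor hypothesis restricts transparently to each face.
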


\begin{proof}
Write $\bfbet \bfF (\bfz)= \bet_1 F_1(\bfz)+\ldots +\bet_m F_m(\bfz)$.
Consider the differential $D$-form on $\calU$:
\begin{equation*}
\ome = G(\bfz) e(\bfbet \bfF(\bfz))\d z_1\wedge \ldots \wedge \d z_D.
\end{equation*}
For any smooth functions $\phi_i(\bfz)$ on $\calU$ we define the $(D-1)$-form
\begin{equation*}
\mu = \sum_{i=1}^m G(\bfz)\phi_i(\bfz) e(\bfbet \bfF(\bfz)) \d z_1 \wedge \ldots \wedge \widehat{\d z_i} \wedge \ldots \wedge \d z_D,
\end{equation*}
where $\widehat{\d z_i}$ means that $\d z_i$ is omitted. Then $\d\mu = \ome_1+\ome_2$, where
\begin{equation*}
\ome_1 = \sum_{i=1}^m (-1)^{i+1} \frac{\partial}{\partial z_i}(G(\bfz)\phi_i(\bfz)) e(\bfbet \bfF(\bfz)) \d z_1\wedge \ldots \wedge\d  z_D,
\end{equation*}
and
\begin{equation*}
\ome_2=\sum_{i=1}^m (-1)^{i+1} G(\bfz)\phi_i(\bfz)\frac{\partial}{\partial z_i} e(\bfbet \bfF(\bfz)) \d z_1\wedge \ldots \wedge \d z_D.
\end{equation*}
Without loss of generality we assume $|\bet_1|=\max_i|\bet_i|$.
We claim that the functions $\phi_i(\bfz)$ can be chosen so that $\ome_2=\bet_1\ome$
for all $\bet_1,\ldots,\bet_m$. For this we have to solve 
\begin{equation*}
2\pi \sqrt{-1}\sum_{i=1}^m (-1)^{i+1} \phi_i(\bfz) \left( 
\bet_1 \frac{\partial F_1}{\partial z_i} (\bfz)+\ldots +
\bet_m \frac{\partial F_m}{\partial z_i} (\bfz)\right)= \bet_1,
\end{equation*}
where $\bfz\in \calU$. The $(m\times m)$-matrix 
$$J=\left( \frac{\partial F_i}{\partial z_j}\right)_{1\leq i,j\leq m}$$
is invertible by assumption, hence
we can choose the functions $\phi_i(\bfz)$ to be the functions defined by
the following equality of row vectors:
$$2\pi \sqrt{-1}\big((-1)^{i+1} \phi_i(\bfz)\big)=(1,0,\ldots,0)J^{-1}.$$
Now we have $\d\mu = \ome_1+\bet_1 \ome$ on $\calU$, 
and the Stokes theorem gives
\begin{equation}\label{eqnlem1}
\int_{\calA} \ome = \frac{1}{\bet_1} \left( \int_{\partial \calA} \mu - \int_{\calA}\ome_1\right),
\end{equation}
where $\partial \calA$ is the boundary of $\calA$. 
The integrals in the right hand side of (\ref{eqnlem1}) have the same form as the integral we started with. 
Thus, we can iterate the above procedure $l$ times for each occuring term. 
In the end we estimate each integral by its $L^1$-bound using the trivial estimate $|e(\bfbet \bfF(\bfz))|\leq 1$. 
This produces the desired inequality.
\end{proof}

Now we can prove that the integral $J(P)$ is absolutely convergent. 
Define $ \gam_i=\gam_{i1}\rho_1+\ldots +\gam_{im}\rho_m$. From the definition of $I(\bfgam)$ we have
$$
I(\bfgam) =\int_{\bft\in\calB} e\left( \sum_{i=1}^r \Tr \big(\gam_i f_i(\bft)\big)\right) \d\bft.$$
This can be rewritten as
$$\int_{\bft\in\calB} e\left( \sum_{i=1}^r \Tr \big(\gam_i \sum_{j=1}^s b_{ij}N(\bft_j)\big)\right) \d\bft
= \prod_{j=1}^s \nu_j(\bfgam),
$$
where
\begin{equation*}
\nu_j(\bfgam) = \int_{\bft_j\in\calB_j} e\left(\Tr \big(\sum_{i=1}^r\gam_i b_{ij} N(\bft_j)\big)\right) \d\bft_j.
\end{equation*}
In Theorem \ref{thm1.2} we have assumed that
\begin{equation*}
{\rm rk} \left( \frac{\partial f_i}{\partial x_k}\right)=r,
\end{equation*}
on the box $\calB$. Without loss of generality we assume that the matrix 
$(b_{ij})_{1\leq i,j\leq r}$ has full rank, and after possibly dissecting the box $\calB$ into smaller ones, we assume that 
\begin{equation*}
\frac{\partial N(\bft_j)}{\partial t_{n(j-1)+1}}\neq 0
\end{equation*}
on $\calB_j$, for all $1\leq j\leq r$.\par
Next we note that
$$
\sum_{i=1}^r \gam_ib_{ij} = \sum_{k=1}^m \rho_k \Tr \left(\zet_k \sum_{i=1}^r \gam_i b_{ij}\right)
= \sum_{k=1}^m \rho_k \Tr\left( \sum_{i=1}^r\sum_{l=1}^m \gam_{il} b_{ij} \rho_l\zet_k\right).
$$
Since the matrix $(b_{ij})_{1\leq i,j\leq r}$ with entries in $F$ has full rank, we have
\begin{equation*}
\det \big(\Tr(b_{ij}\rho_l\zet_k)_{\substack{(1,1)\leq (i,l)\leq (r,m)\\ (1,1)\leq (j,k)\leq (r,m)}}\big)\neq 0,
\end{equation*}
which follows from Lemma \ref{linalg}. Thus we have the relation
\begin{equation*}
|\bfgam|\asymp \max_{(j,k)} \left| \sum_{i=1}^r\sum_{l=1}^m \gam_{il}\Tr (b_{ij}\rho_l\zet_k)\right|,
\end{equation*}
where the implied constants only depend on the numbers $b_{ij}$ and the bases $\zet_k$ and $\rho_l$. 
Next we choose $j_0$ where the maximum is attained, and assume $j_0=1$ for simplicity of notation.\par
Now we apply Lemma \ref{lem5.1b} to the integral $\nu_1(\bfgam)$. For this we set
\begin{equation*}
F_k(\bft_1)=\Tr (\rho_k N(\bft_1)),
\end{equation*}
for $1\leq k\leq m$. By the above assumptions and Lemma \ref{linalg} we have
\begin{equation*}
\det \left(\frac{\partial F_k(\bft_1)}{\partial t_{1l}}\right)_{1\leq k,l\leq m}\neq 0,
\end{equation*}
on the box $\calB_1$. Lemma \ref{lem5.1b} implies the bound
\begin{equation*}
\left| \int_{\calB_1} e\left(\Tr \big(( \sum_{i=1}^r \gam_i b_{ij}) N(\bft_1)\big)\right) \d\bft_1\right| \ll |\bfgam|^{-1}.
\end{equation*}
Since the matrix $B$ satisfies Condition II, we can assume
that the matrices $(b_{ij})_{\substack{1\leq i\leq r\\ 2\leq j\leq r+1}}$ and 
$(b_{ij})_{\substack{1\leq i\leq r\\ r+2\leq j\leq s}}$ have full rank, 
possibly after renaming the indices. For a large real number $T$ we obtain the estimate
\begin{align*} 
\int_{T<|\bfgam|\leq 2T} |I(\bfgam)|\d\bfgam &\ll \sup_{T<|\bfgam|\leq 2T}|\nu_1(\bfgam)| \int_{|\bfgam|\leq 2T}\prod_{j=2}^s |\nu_j(\bfgam)|\d\bfgam\\
&\ll T^{-1} J_1(2T)^{1/2}J_2(2T)^{1/2},
\end{align*}
where
\begin{equation*}
J_1(T)=\int_{|\bfgam|\leq T}\prod_{j=2}^{r+1} |\nu_j(\bfgam)|^2\d\bfgam,
\end{equation*}
and similarly for $J_2(T)$. To establish the absolute convergence of $J(P)$ it is now sufficient to show that 
$J_i(T)\ll T^\eps$ for $i=1,2$.\par
For this we consider the exponential sum
\begin{equation*}
|S_2(\bflam_2)\ldots S_{r+1}(\bflam_{r+1})|^2,
\end{equation*}
and perform the circle method analysis of the preceding 
sections with respect 
to this exponential sum instead of $S(\bfalp)$. The second part of the proof of Lemma \ref{lem4.2} gives the estimate
\begin{align*}
\int_{\grM(\tet)} |S_2(\bflam_2)\ldots S_{r+1}(\bflam_{r+1})|^2\d\bfalp &\ll \int_{[0,1]^{mr}}|S_2(\bflam_2)\ldots S_{r+1}(\bflam_{r+1})|^2\d\bfalp \\ &\ll T^{mnr+\eps}.
\end{align*}
Furthermore, we have
\begin{equation*}
\int_{\grM(\tet)}|S_2(\bflam_2)\ldots S_{r+1}(\bflam_{r+1})|^2\d\bfalp = 
\widetilde{\grS}(T) J_1(T^{mr(n-1)\tet}) T^{mnr}+O(T^{mnr-\eta})
\end{equation*}
for some $\eta >0$, where the singular series is
\begin{equation*}
\widetilde{\grS}(T)=\sum_{q\leq T^{mr(n-1)\tet}}q^{-2mnr}\sum_{\bfa}|S_{\bfa,q}^{(2)}\ldots S_{\bfa,q}^{(r+1)}|^2.
\end{equation*}
The term $q=1$ and $a_{11}=\ldots= a_{rm}=0$ produces the lower bound
$\widetilde{\grS} (T)\geq 1$. Thus we have 
$J_1(T^{mr(n-1)\tet})\ll T^\eps$,
as desired. Since the same arguments apply also to $J_2$, we see that $J(P)$ is absolutely convergent. Indeed, we have $|J(P)-\lim_{P\rightarrow \infty}J(P)|\ll P^{-1+\eps}$ for some $\eps >0$.
\begin{lemma}\label{lem5.2}
There exists $\eps >0$ such that $J(P)=\psi( 0) + O(P^{-1+\eps})$
when $P\to\infty$. Moreover, if
the system of equations $f_i(\bfx)=0$ has a nonsingular solution in 
$\calB$, simultaneously for all the infinite places of $F$, then $\psi( 0)>0$.
\end{lemma}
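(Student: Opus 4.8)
The plan is to deduce the first assertion from the Fourier inversion theorem, the rate $O(P^{-1+\eps})$ being already contained in the bound $|J(P)-\lim_{P\to\infty}J(P)|\ll P^{-1+\eps}$ obtained above; so the real point is to identify $\lim_{P\to\infty}J(P)$ with $\psi(0)$. First I would record that $\psi$ is bounded and compactly supported: $V(\bfu)$ is empty unless $\bfu$ lies in the bounded image of $\calB$ under $\bft\mapsto(f_{ij}(\bft))$, and since $|\det C(\bft)|$ is continuous and nowhere zero on the compact box $\calB$ it is bounded below there by some $c>0$, whence $|\psi(\bfu)|\le c^{-1}\meas(V(\bfu))\ll 1$. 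In particular $\psi\in L^1(\R^{rm})$, so $I(\bfgam)=\int_{\R^{rm}}\psi(\bfu)e(\bfgam\cdot\bfu)\,\d\bfu$ is continuous in $\bfgam$; we have also shown $I\in L^1(\R^{rm})$ (this is exactly what makes $J(P)$ absolutely convergent). Fourier inversion therefore gives $\psi(\bfu)=\int_{\R^{rm}}I(\bfgam)e(-\bfgam\cdot\bfu)\,\d\bfgam$ for almost every $\bfu$, the right-hand side being continuous in $\bfu$, and letting $P\to\infty$ over cubes shows $\lim_{P\to\infty}J(P)=\int_{\R^{rm}}I(\bfgam)\,\d\bfgam$.

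It then remains to check that the function $\psi$ defined by its integral formula is itself continuous at the origin, so that its value there coincides with the continuous representative above. Working on one of the small boxes on which the inverse function theorem applies, write $\bft=(\bft',\bft'')$ and solve $f_{ij}(\bft',\bft'')=u_{ij}$ for $\bft''=\bft''(\bft',\bfu)$, a smooth function; then
\[
\psi(\bfu)=\int_{\R^{m(ns-r)}}\id_{\calB}\big(\bft',\bft''(\bft',\bfu)\big)\,\big|\det C\big(\bft',\bft''(\bft',\bfu)\big)\big|^{-1}\,\d\bft'.
\]
As $\bfu\to 0$ the integrand converges pointwise to its value at $\bfu=0$ for every $\bft'$ whose image under $\bft'\mapsto(\bft',\bft''(\bft',0))$ does not lie on $\partial\calB$, that is, outside a null set, and it is dominated by $c^{-1}\id_{\calA}$ for a fixed bounded box $\calA$; dominated convergence then yields $\psi(\bfu)\to\psi(0)$, and hence $J(P)=\psi(0)+O(P^{-1+\eps})$.

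For the positivity, suppose that the system $f_i(\bfx)=0$ has a nonsingular solution $\bft_0$ in $\calB$, taken simultaneously at all infinite places of $F$; by Lemma \ref{linalg} and the remark following it this amounts to a nonsingular real zero of the system $f_{ij}=0$, which we may assume lies in the interior of $\calB$ (the solution locus through $\bft_0$ has dimension $m(ns-r)\ge 1$, so we can slide $\bft_0$ into the interior, enlarging the box slightly in the degenerate case where this locus is tangent to $\partial\calB$ from outside). Writing $\bft_0=(\bft_0',\bft_0'')$, the inverse function theorem applied with the nonvanishing minor $C$ produces, for all $\bft'$ in a small ball $B(\bft_0',\del)$, a point $\bft''(\bft')$ with $f_{ij}(\bft',\bft''(\bft'))=0$ and $(\bft',\bft''(\bft'))\in\calB$; hence $B(\bft_0',\del)\subset V(0)$, and using that $|\det C|$ is bounded above on the compact box,
\[
\psi(0)=\int_{V(0)}|\det C(\bft)|^{-1}\,\d\bft'\ \ge\ c'\,\meas\big(B(\bft_0',\del)\big)\ >\ 0
\]
for some $c'>0$.

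The genuinely delicate points are the continuity of $\psi$ at the origin — concretely, that the preimage of $\partial\calB$ under $\bft'\mapsto(\bft',\bft''(\bft',0))$ is Lebesgue-null, which uses transversality of the faces of the box to the solution manifold and can be arranged by a harmless adjustment of the box — and, relatedly, the reduction to an interior solution in the positivity statement. Everything else is a routine unwinding of the Fourier inversion theorem together with the inverse function theorem.
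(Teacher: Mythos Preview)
Your proof is correct and follows essentially the same route as the paper: Fourier inversion applied with $I\in L^1(\R^{rm})$ together with continuity of $\psi$ at the origin for the first statement, and positivity of $\psi(0)$ as the integral of a strictly positive integrand over a domain of positive measure for the second. The paper is much terser, citing Stein--Weiss for the inversion theorem and Birch \cite{Bir1961} for the continuity of $\psi$, whereas you supply these details directly via dominated convergence and the inverse function theorem (and rightly flag the box-boundary transversality issue that the paper leaves to the reference).
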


\begin{proof}
The first statement follows from a form of the 
Fourier inversion theorem \cite[Cor. 1.21]{SteinWeiss}, 
which can be applied since $I(\bfgam)$ is
integrable, and the continuity of $\psi(\bfu)$ which is explained, 
for example, in \cite[Section 6]{Bir1961}. 
Next, let $\tau_1,\ldots,\tau_m$ be the $m$ different embeddings 
$F\to\bar{\Q}$. Assume that we are given a solution $\bft\in\calB$ of the
system of equations $\tau_l(f_i(\bft))=0$ for all $1\leq i\leq r$ and $1\leq
l\leq m$. Then $\bft$ is a solution of $f_i(\bfx)=0$ in $V$ since
$\det(\tau_l(\ome_j))\neq 0$, and thus a non-singular solution of the system
$f_{ij}(\bfx)=0$. Therefore, $\psi(0)$ is positive as the integral of a
positive integrand over a domain of positive measure. 
\end{proof}

\subsection{Singular Series}
Our next goal is to establish the absolute convergence of the singular series
$\grS (P)$ for $P\to \infty$. This is done using a method similar to that of
\cite{HBSko}.\par

As usual, we order the pairs $(l,j)$ lexicographically.
We claim that no $(m\times m)$-minor of the matrix
\begin{equation}\label{eqn6.0}
\left( \frac{\partial}{\partial t_{lj}} \Tr \big(\rho_i
  N(\bft_1)\big)\right)_{\substack{ 1\leq i\leq m\\ (1,1)\leq (l,j)\leq (2,m)}}
\end{equation}
has determinant zero. In the opposite case
we can find integers $1\leq j_1 < \ldots < j_q \leq m$ and
$1\leq j_{q+1}< \ldots < j_m \leq m$, and rational numbers $c_1,\ldots,c_m$, not
all of them zero, such that 
$$\sum_{k=1}^q c_k \frac{\partial \Tr (\rho_i N(\bft_1))}{\partial t_{1,j_k}} +
\sum_{k=q+1}^m c_k \frac{\partial \Tr (\rho_i N(\bft_1))}{\partial t_{2,j_k}} =0$$
for all $1\leq i\leq m$. From (\ref{eqn5.0}) and the non-degeneracy of the trace we deduce
\begin{equation*}
 \sum_{k=1}^q c_k \ome_{j_k}\frac{\partial N(\bft_1)}{\partial t_1}+
\sum_{k=q+1}^m c_k \ome_{j_k} \frac{\partial N(\bft_1)}{\partial t_2} =0
\end{equation*}
identically in $\bft_1$. Now we set 
$$t_1= \sum_{k=1}^q c_k \ome_{j_k}, \quad t_2=\sum_{k=q+1}^m c_k \ome_{j_k}, \quad 
t_3=\ldots=t_n=0,$$ and obtain
\begin{equation*}
t_1 \frac{\partial}{\partial t_1} N(\bft_1)+t_2 \frac{\partial}{\partial t_2}
N(\bft_1) =0.
\end{equation*}
By Euler's identity the left hand side is equal to $m
N(t_1,t_2,0,\ldots,0)$. This is a contradiction since not all of the $c_i\in\Q$ are
zero, and thus $N(t_1,t_2,0,\ldots,0)\neq 0$. This proves the above claim, 
which we use in the proof of our next lemma.

\begin{lemma}\label{lem6.1}
The series
\begin{equation*}
\grS=\lim_{P\to\infty}\grS(P) = \sum_{q=1}^\infty q^{-mns} \sum_{\bfa} S_{\bfa,q}
\end{equation*}
is absolutely convergent, and we have $|\grS - \grS (P)|\ll P^{-\eta}$ for some $\eta >0$.
\end{lemma}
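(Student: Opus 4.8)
The standard route to convergence of a singular series of this shape is to bound the complete exponential sums $S_{\bfa,q}$ multiplicatively and then to prove a pointwise bound on the $p$-adic densities that beats the trivial estimate by a power of $q$. Concretely, the plan is: (1) show that $q\mapsto S_{\bfa,q}$ is multiplicative in the sense that the inner sum $\sum_{\bfa} S_{\bfa,q}$ factors over prime powers, reducing everything to prime powers $q=p^k$; (2) prove a bound of the form $\sum_{\bfa}|S_{\bfa,p^k}|\ll p^{mns k}\cdot p^{-k(1+\delta)}$ for some fixed $\delta>0$ (uniformly in $p$ and $k$), which upon summing gives both absolute convergence of $\grS$ and the tail estimate $|\grS-\grS(P)|\ll P^{-\eta}$; (3) conclude. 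Step (2) is the crux.

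For step (2) I would follow the Birch--Davenport--Lewis / Heath-Brown--Skorobogatov method referred to in the text. The key geometric input has just been established: no $(m\times m)$-minor of the matrix in (\ref{eqn6.0}) vanishes identically, i.e. the forms $\Tr(\rho_i N(\bft_1))$, $1\le i\le m$, already have a non-degenerate Jacobian in the first $2m$ of their $mn$ variables. This is a "smoothness with room to spare" statement: it means the projective variety cut out by the $f_{ij}$ in $\P^{mns-1}$ has singular locus of codimension at least $2$ extra, beyond what mere non-singularity would give — it is exactly the hypothesis that makes Birch's circle-method bounds on $S_{\bfa,q}$ lose an extra power. More precisely, one estimates $S_{\bfa,q}$ by counting, modulo $q$, the points where the Jacobian of the system drops rank; the bound above on minors of (\ref{eqn6.0}) (one such submatrix for each of the $j=1,\dots,s$ blocks, since each $f_i$ involves every block $\bft_j$ and $B$ satisfies Condition II) forces this singular locus to be small enough that Hensel-type / stationary-phase estimates give $|S_{\bfa,p^k}|\ll p^{mns k - k}\cdot p^{-k\delta}$ after summing over $\bfa$ coprime to $p^k$. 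I would set this up exactly as in \cite[Section 7]{Bir1961} adapted as in \cite{HBSko}, invoking Lemma \ref{linalg} to pass freely between the $V$-linear-algebra picture and the $\R$-linear-algebra picture so that the norm forms $N(\bft_j)$ can be treated as $mn$-variable forms over $\Q$.

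Assembling: writing $\grS-\grS(P)=\sum_{q>P^{mr(n-1)\tet}}q^{-mns}\sum_{\bfa}S_{\bfa,q}$, multiplicativity lets us write the full sum as an Euler product $\prod_p\big(1+\sum_{k\ge1}p^{-mnsk}\sum_{\bfa}S_{\bfa,p^k}\big)$, each factor being $1+O(p^{-1-\delta})$ by step (2), hence the product converges absolutely; and the tail beyond $X:=P^{mr(n-1)\tet}$ is $\ll\sum_{q>X}q^{-1-\delta}\ll X^{-\delta}=P^{-mr(n-1)\tet\delta}$, so $\eta:=mr(n-1)\tet\delta$ works.

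**Main obstacle.** The genuinely delicate point is step (2): getting the uniform (in $p$, including small primes, and in $k$) power saving $\sum_{\bfa}|S_{\bfa,p^k}|\ll p^{mnsk-k(1+\delta)}$. For $k=1$ this is a Weil-type bound over $\dbF_p$ requiring control of the singular locus of the reduction mod $p$ of the system, which can be bad for finitely many $p$; for $k\ge 2$ one needs the Hensel-lifting argument of Birch, and the non-vanishing of the minors of (\ref{eqn6.0}) must be quantified into an explicit codimension bound that is then fed into Birch's dimension-counting inequality (as in the proof of Lemma \ref{lem3.1}). Handling the finitely many bad primes separately — where the saving comes not from a single prime but from summing a convergent local factor — is the usual technical annoyance rather than a real difficulty.
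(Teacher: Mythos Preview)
Your approach is not the one taken in the paper, and the mechanism you describe for step~(2) would not deliver the required saving. The paper does \emph{not} reduce to prime powers via multiplicativity, nor does it bound $S_{\bfa,q}$ through the mod-$p$ singular locus and stationary phase. Instead it argues directly over a dyadic range $R<q\le 2R$: choosing a unit box on which all $(m\times m)$-minors of (\ref{eqn6.0}) are non-vanishing, one identifies $S_{\bfa,q}$ with $S(\bfalp)$ at $P=q$, $\alp_{ij}=a_{ij}/q$, so that $S_{\bfa,q}=\prod_{j=1}^s S_{\bfa,q}^{(j)}$. The minor-arc argument (first half of Lemma~\ref{lem4.2}) shows that for each coprime $\bfa$ some factor satisfies $|S_{\bfa,q}^{(j)}|\ll q^{mn-\Delta}$; Cauchy--Schwarz on the remaining $2r$ factors (using Condition~II to split into two rank-$r$ blocks) reduces matters to bounding $\grS'_R=\sum_{R<q\le 2R}q^{-2mnr}\sum_{\bfa}|S_{\bfa,q}^{(1)}\cdots S_{\bfa,q}^{(r)}|^2$. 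This is handled by running the circle method a second time on $|S_1(\bflam_1)\cdots S_r(\bflam_r)|^2$: the mean-value bound of Lemma~\ref{lem4.1} controls the total, and the major-arc analysis isolates $\grS'(P)J'(P^{\,\cdot})P^{mnr}$. The non-vanishing of the minors of (\ref{eqn6.0}) is used here, via Lemma~\ref{lem5.1b} with $l=m-1$, to show that the auxiliary singular integral $J'(P)$ converges to a positive constant, whence $\grS'(P)\ll P^{\eps}$ and $\grS_R\ll R^{-\Delta+\eps}$.

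The reason your described route fails is a variable count. Treating the system $(f_{ij})$ as a generic system of $mr$ forms of degree $n$ in $mns=mn(2r+1)$ variables and invoking Birch's singular-locus criterion would require codimension exceeding roughly $(n-1)2^{n-1}\cdot mr$, which is not available here --- the number of variables is only linear in $n$ and $r$, and this is precisely why the paper (and \cite{HBSko}, \cite{BDL1962}) exploits the product structure $S(\bfalp)=\prod_j S_j(\bflam_j)$ together with Cauchy--Schwarz instead. You have also misread the role of (\ref{eqn6.0}): it is not used to control a mod-$p$ singular locus, but to make Lemma~\ref{lem5.1b} applicable (with the extra $m-1$ good columns) so that $|\nu_j(\bfgam)|\ll\prod_k(1+|\Tr(\zet_k\sum_i\gam_ib_{ij})|)^{-1}$, giving the integrability of $J'$. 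Multiplicativity is used in the paper only afterwards (Lemma~\ref{lem6.2}), to interpret the already-convergent $\grS$ as a product of local densities, not to prove convergence.
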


\begin{proof}
First we choose a box $\calB = \calB_1\times\ldots \times \calB_s$ in $V^{ns}$
in such a way that each $(m\times m)$-minor of the matrix (\ref{eqn6.0}) has full rank on
$\calB_1$, and similarly for all the other $\calB_i$. We choose the $\calB_i$ as
cubes of products of half open and half closed intervals. 
Stretching $\calB$ by a suitable factor we can assume that each box has side length
$1$. Note that this does not change the nonvanishing of the
$(m\times m)$-minors of (\ref{eqn6.0}). We have $S_{\bfa,q}=S(\bfalp)$,
where $P=q$ and $\alp_{ij}=a_{ij}/q$, and then we obtain
$S_{\bfa,q}=S_{\bfa,q}^{(1)}\ldots S_{\bfa,q}^{(s)}$,
where $S_{\bfa,q}^{(j)}=S_j(\bflam_j)$.
By the first part of the proof of Lemma \ref{lem4.2}, for every $\bfa$
satisfying $\gcd (q,a_{11},\ldots,a_{rm})=1$ there exists $j$ such that
\begin{equation}\label{eqn6.1}
|S_{\bfa,q}^{(j)}|\ll q^{mn-\Delta}
\end{equation}
for some $\Delta>0$. For simplicity we assume that
$j=s$, since the other contributions can be estimated in exactly the same
way. We assume as before that the submatrix of $B$ formed by the first $r$ columns,
as well as that formed by the next $r$ columns, have both full rank. We now apply the circle
method as before to the exponential sum 
\begin{equation*}
|S_1(\bflam_1)\ldots S_r(\bflam_r)|^2,
\end{equation*}
instead of $S(\bfalp)$. By the second part of the proof of Lemma
\ref{lem4.2} we have
$$
\int_{\grM (\tet)}|S_1(\bflam_1)\ldots S_r(\bflam_r)|^2\d\bfalp\ll
\int_{[0,1]^{mr}}|S_1(\bflam_1)\ldots S_r(\bflam_r)|^2\d\bfalp\\
\ll P^{mnr+\eps}.
$$
Next, the major arc analysis gives us
\begin{equation*}
\int_{\grM (\tet)}|S_1(\bflam_1)\ldots S_r(\bflam_r)|^2\d\bfalp = \grS'(P)J'(P^{mr(n-1)\tet})
P^{mnr}+O(P^{mnr-\eta}),
\end{equation*}
for some $\eta>0$. Here the singular series is 
\begin{equation*}
\grS'(P)=\sum_{q\leq
  P^{mr(n-1)\tet}}q^{-2mnr}\sum_{\bfa}|S_{\bfa,q}^{(1)}\ldots S_{\bfa,q}^{(r)}|^2,
\end{equation*}
and the singular integral is
\begin{equation*}
J'(P)=\int_{|\bfgam|\leq P} \prod_{j=1}^r |\nu_j(\bfgam)|^2 \d\bfgam.
\end{equation*}
By Lemma \ref{lem5.1b}, applied with $l=m-1$, and the choice of our boxes $\calB_j$ we have 
\begin{equation*}
|\nu_j(\bfgam)|\ll \prod_{k=1}^m (1+|\Tr (\zet_k \sum_{i=1}^r \gam_i
b_{ij})|)^{-1}
\end{equation*}
for all $j$.

This gives the estimate
\begin{equation*}
J'(P)\ll \int_{|\bfgam|\leq P} \prod_{j=1}^r\prod_{k=1}^m (1+|\Tr (\zet_k \sum_{i=1}^r \gam_i
b_{ij})|)^{-2}\d\bfgam.
\end{equation*}
Next we use the coordinate transformation $\bfgam'= M\bfgam$ with the matrix
$M=(\Tr (b_{ij}\rho_l\zet_k ))_{(i,l),(j,k)}$, and obtain
\begin{equation*}
J'(P) \ll \int_{|\bfgam'| \leq C P} \prod_{j=1}^r\prod_{k=1}^m
(1+|\gam_{jk}'|)^{-2}\d\bfgam'
\end{equation*}
for some constant $C$. Note that $M$ has full rank by Lemma \ref{linalg} since $(b_{ij})_{1\leq
  i,j\leq r}$ is assumed to have full rank. The above equation shows that
$J'(P)$ is absolutely convergent, and thus the same arguments as in Lemma
\ref{lem5.2} imply $J'(P)=c_0+o(1)$. Here $c_0>0$ since the diagonal
solutions ensure the existence of non-singular solutions in Lemma
\ref{lem5.2}. We deduce that
\begin{equation}\label{eqn6.2}
\grS'(P)\ll P^\eps.
\end{equation}
We come back to our main argument and consider
\begin{equation*}
\grS_R=\sum_{R<q\leq 2R}q^{-mns}\sum_\bfa |S_{\bfa,q}|.
\end{equation*}
Using equation (\ref{eqn6.1}) and the Cauchy--Schwarz inequality we get
\begin{equation*}
\grS_R\ll R^{-\Delta}(\grS^{(1)}_R)^{1/2}(\grS^{(2)}_R)^{1/2},
\end{equation*}
where 
\begin{equation*}
\grS_R^{(1)}=\sum_{R<q\leq 2R}q^{-2mnr}\sum_{\bfa}|S_{\bfa,q}^{(1)}\ldots
S_{\bfa,q}^{(r)}|^2,
\end{equation*}
and $\grS_R^{(2)}$ of analogous form. Thus, equation (\ref{eqn6.2}) gives us
$\grS_R\ll R^{-\Delta +\eps}$,
which proves the lemma for $\eps$ small enough.

\end{proof}

As usual, the singular series factorises as $\grS =\prod_p c_p$,
where the product is taken over all rational primes $p$, and 
the local factors are
\begin{equation*}
c_p=\sum_{l=1}^\infty p^{-lmns}\sum_\bfa S_{\bfa,p^l}.
\end{equation*}   
Note that our polynomials $f_{ij}(\bfx+\bfd)$ have coefficients in $\Z$
since all the entries of the matrix $B$ are in $\calO_F$ and 
also we assumed that
$\{\xi_1,\ldots,\xi_n\}\subset \calO_K$ in Theorem \ref{thm1.2}. 
By standard arguments we see that the constants $c_p$ can be written as 
\begin{equation*}
c_p = \lim_{l\to \infty}p^{-l(mns-mr)}C(p,l).
\end{equation*}
Here $C(p,l)$ is the number of solutions to the simultaneous congruences
\begin{equation}\label{eqn6.3}
f_{ij}(\bfx+\bfd)\equiv 0 \bmod p^l,
\end{equation}
for $1\leq i\leq r$ and $1\leq j\leq m$, where all the components $x_{kl}$ of
$\bfx$ run through a complete set of residues modulo $p^l$.\par
Next we factorise the local densities $c_p$ further to obtain an 
interpretation in terms of the number field $F$ and 
the original system of equations $f_i(\bfx+\bfd)=0$. 
For this let $\grp_1,\ldots,\grp_t$ be the primes of $F$ which lie above $p$ and 
let $(p)=\prod_{i=1}^t\grp_i^{e_i}$ be the prime ideal factorization of the principal ideal $(p)$. 
Note that for any $z\in F$ we have $z=\sum_{j=1}^m \zet_j \Tr (\rho_j z)$, and $z\in (p^l)$ 
if and only if $\Tr(\rho_j z) \equiv 0\bmod p^l$ for $1\leq j\leq m$. 
Therefore we see that for fixed $i$ and $1\leq j\leq m$ the system of equations (\ref{eqn6.3}) is equivalent to 
\begin{equation}\label{eqn6.4}
f_i(\bfx+\bfd)\equiv 0 \bmod (p^l).
\end{equation}
Note that for some fixed $i$ a full set of residues 
$x_{i1},\ldots,x_{im}$ 
in $(\Z/p^l)^m$ corresponds to a full set of residues $x_i\in\grn$ modulo
the ideal $(p^l)\grn$ under the identification
$x_i=x_{i1}\ome_1+\ldots+x_{im}\ome_m$. 
Therefore $C(p,l)$ is equal to the
number of solutions of the system of congruences (\ref{eqn6.4}) for $1\leq
i\leq r$, where $x_j\in \grn$ run through a complete set of residues modulo
$(p^l)\grn$ for $1\leq j\leq ns$. Next write $\grn = \grn'
\prod_{i=1}^t\grp_i^{n_i}$ with $n_i\in\N$ such that $\grn'$ is coprime to
$(p)$. By a slightly modified Chinese remainder theorem we have an isomorphism
\begin{equation}\label{eqn6.5}
\grn/(p^l)\grn\to \oplus_{i=1}^t\grp_i^{n_i}/\grp_i^{le_i+n_i}.
\end{equation}
Hence $C(p,l)=\prod_{k=1}^tD(\grp_k,le_k)$,
where $D(\grp_k,l)$ is the number of solutions of the system 
$f_i(\bfx+\bfd)\equiv 0\bmod\grp_k^l$ for $1\leq i\leq r$, 
where we count solutions $x_j\in \grp_k^{n_k}$ modulo $\grp_k^{l+n_k}$
for all $1\leq j\leq ns$.\par

\begin{lemma}\label{lem6.2}
The singular series factorises as $\grS=\prod_\grp \sig_\grp$, where the
product is taken over all primes $\grp$ of $\calO_F$, and the corresponding
factors are given by
\begin{equation*}
\sig_\grp=\lim_{l\to \infty}\Nm (\grp) ^{-l(ns-r)}D(\grp,l).
\end{equation*}
Moreover, $\grS>0$ if the system of equations $f_i(\bfx+\bfd)=0$ has a
nonsingular solution in $\grn_\nu^{ns}$ for all finite places of $k$. 
\end{lemma}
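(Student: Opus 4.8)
The plan is to read off the factorisation from the identities already assembled above, and to obtain the positivity by applying Hensel's lemma place by place.

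\emph{Factorisation.} First I would use the formulas already in hand: $\grS=\prod_p c_p$ with $c_p=\lim_{l\to\infty}p^{-l(mns-mr)}C(p,l)$ and $C(p,l)=\prod_{k=1}^t D(\grp_k,le_k)$. Writing $\Nm(\grp_k)=p^{f_k}$ for the residue degrees and using $\sum_{k=1}^t e_kf_k=m$, one has $p^{-l(mns-mr)}=p^{-lm(ns-r)}=\prod_{k=1}^t\Nm(\grp_k)^{-(le_k)(ns-r)}$, so that
\[
c_p=\lim_{l\to\infty}\ \prod_{k=1}^t\Big(\Nm(\grp_k)^{-(le_k)(ns-r)}D(\grp_k,le_k)\Big).
\]
The quantity $\Nm(\grp)^{-l(ns-r)}D(\grp,l)$ is the $\grp$-adic density of the affine variety $f_1=\cdots=f_r=0$ over the local field $F_\nu$, computed exactly as $c_p$ was computed over $\Q_p$, so its limit $\sigma_\grp$ exists by the same argument; since the subsequence $l\mapsto le_\grp$ has the same limit, $c_p=\prod_{\grp\mid p}\sigma_\grp$. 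Taking the product over $p$ and recalling that $\grS$ converges absolutely by Lemma \ref{lem6.1} gives the factorisation $\grS=\prod_\grp\sigma_\grp$.

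\emph{Positivity.} Absolute convergence forces $\sigma_\grp\to 1$, so all but finitely many factors are positive and it suffices to show $\sigma_\grp>0$ for every finite place $\nu=\grp$. Fix such a $\grp$ and a nonsingular solution $\bfx_0\in\grn_\nu^{ns}$ of $f_i(\bfx+\bfd)=0$. Since $\grn_\nu=\pi^{n_k}\calO_\nu$ is a principal ideal of $\calO_\nu$, the substitution $\bfx=\pi^{n_k}\bfw$ converts $D(\grp,l)$ into the number of $\bfw\bmod\grp^l$ with $\tilde f_i(\bfw):=f_i(\pi^{n_k}\bfw+\bfd)\equiv 0\bmod\grp^l$, and carries $\bfx_0$ to a nonsingular zero $\bfw_0\in\calO_\nu^{ns}$ of the $\tilde f_i$. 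Then some $r\times r$ minor of the Jacobian of $(\tilde f_1,\dots,\tilde f_r)$ at $\bfw_0$ has finite valuation $v$, and Hensel's lemma shows that for $l>2v$ the $r$ ``dependent'' coordinates of any $\bfw\equiv\bfw_0\bmod\grp^{v+1}$ solving the congruences mod $\grp^l$ are uniquely determined by the remaining $ns-r$ coordinates; there are $\gg\Nm(\grp)^{l(ns-r)}$ such residues $\bfw\bmod\grp^l$, with an implied constant independent of $l$. Hence $D(\grp,l)\gg\Nm(\grp)^{l(ns-r)}$, so $\sigma_\grp>0$; a convergent product of positive reals being positive, $\grS=\prod_\grp\sigma_\grp>0$.

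\emph{Main obstacle.} The only step that is not pure bookkeeping is the existence of the limit defining $\sigma_\grp$, i.e. that the normalised counts $\Nm(\grp)^{-l(ns-r)}D(\grp,l)$ converge. This is the local analogue of the (here taken for granted) existence of $c_p$, and it rests on $f_1=\cdots=f_r=0$ being a complete intersection whose singular locus has codimension larger than $r$ in $\A^{ns}$; I expect this to be the point at which Condition II on $B$ is invoked, together with the elementary fact that for a single norm form the locus $\nabla N(\bfx_j)=0$ already has codimension $2$ in the $\bfx_j$-space. Granting that, the factorisation is routine and the positivity is the standard Hensel estimate above, the only mild nuisance being the shift by $\bfd$ and the non-principal ideals $\grp^{n_k}$, which both disappear after the rescaling $\bfx=\pi^{n_k}\bfw$.
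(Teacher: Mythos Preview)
Your bookkeeping for the factorisation (the identity $p^{-lm(ns-r)}=\prod_{\grp\mid p}\Nm(\grp)^{-le_\grp(ns-r)}$ via $\sum_\grp e_\grp f_\grp=m$) is fine, and your Hensel argument for positivity is essentially the same as the paper's, modulo the cosmetic rescaling $\bfx=\pi^{n_k}\bfw$.

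The point where you diverge from the paper is exactly the one you flag: the existence of the limit $\sigma_\grp$. You say it follows ``by the same argument'' as for $c_p$ and then speculate that one needs a codimension estimate on the singular locus together with Condition~II. The paper does something much cleaner and avoids any further geometric input. It writes $D(\grp,l)$ as a character sum using the orthogonality relation for the additive character $y\mapsto e(\Tr(yz))$ on $\widehat{\gra}/\calC$, where $\widehat{\gra}=\gra^{-1}\calC$ is the dual of $\gra=\grp^l$; this yields
\[
\Nm(\grp)^{-l(ns-r)}D(\grp,l)=\Nm(\grp)^{-lns}\sum_{\bfy}\sum_{\bfx}e\Big(\sum_{i=1}^r\Tr\big(y_i f_i(\bfx+\bfd)\big)\Big),
\]
and after writing $y_i=\sum_j(a_{ij}/q)\rho_j$ one recognises this as a subseries of the series defining $\grS$. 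Since $\grS$ is already known to be absolutely convergent (Lemma~\ref{lem6.1}), the partial sums of this subseries converge, and that is the existence of $\sigma_\grp$. No singular-locus estimate is needed here; Condition~II has already done all its work inside Lemma~\ref{lem6.1}.

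Your alternative route via codimension of the singular locus could in principle be made to work (in the spirit of Birch--Schmidt density arguments), but it is genuinely more laborious: you would have to prove the codimension bound for the system $f_1=\cdots=f_r=0$ and then control the contribution of singular residues, whereas the paper gets the limit for free from the analytic estimate already in hand. Also note a small circularity in your write-up: saying ``the subsequence $l\mapsto le_\grp$ has the same limit'' presupposes that the full limit exists, which is precisely what is at issue.
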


\begin{proof}
By the above discussion and the multiplicativity of the norm of ideals, 
for the first part of the lemma it is enough to show that the limits in the
definition of $\sig_\grp$ exist.
For this we identify $\Nm(\grp)^{-l(ns-r)}D(\grp,l)$ with a subseries of
$\grS$. For some ideal $\gra$ let $\hat{\gra}$ be the dual given by
\begin{equation*}
\hat{\gra}=\{y\in F: \Tr (yz)\in\Z \mbox{ for all } z\in\gra\},
\end{equation*}
and note that $\hat{\gra}=\gra^{-1}\calC$. For some $z\in\calO_F$ we consider
the character $e(\Tr(yz))$ for $y\in\hat{\gra}$. This is trivial if
and only if $z\in\gra$, since $\hat{\hat{\gra}}=\gra$. Therefore we have the
orthogonality relation
\begin{equation*}
\sum_{y}e(\Tr(yz))=\left\{ \begin{array}{rc} \Nm (\gra) & \mbox{for }  z\in\gra, \\ 0
    &   \mbox{otherwise},
\end{array}\right. 
\end{equation*}
where the sum is taken over a complete set of residues $y\in \hat{\gra}$
modulo $\calC$. Note that the index of $\calC$ in $\hat{\gra}$ is just $\Nm(\gra)$. Using
this relation $r$ times we see that
\begin{equation*}
\Nm(\grp)^{-l(ns-r)}D(\grp,l)=\Nm(\grp)^{-lns}\sum_{\bfy}\sum_{\bfx}e\left(\sum_{i=1}^r
  \Tr(y_if_i(\bfx+\bfd))\right),
\end{equation*}
where the first sum is over all $\bfy\in \hat{(\grp^l)}^r$ modulo $\calC$, and the second sum
is over all $\bfx$ with $x_j\in\grp^{n_\grp}$ modulo $\grp^{l+n_\grp}$. We
write here $n_\grp$ for the power to which $\grp$ occurs in $\grn$ as we did
in the analysis preceding this lemma. Putting $y_i=y_{i1}\rho_1+\ldots+ y_{im}\rho_m$ with 
$y_{ij}=a_{ij}/q$ for some integers $a_{ij}$ and $q$, and 
using equation (\ref{eqn6.5}) for extending the summation over $\bfx$ to several sets of representatives, 
we can identify $\Nm(\grp)^{-l(ns-r)}D(\grp,l)$ with a subseries of $\grS$ as claimed above.\par
We turn to the second part of the lemma. Since $\grS$ is absolutely
convergent, it is enough to show that $\sig_\grp$ is positive if 
the system of
equations $f_i(\bfx+\bfd)=0$ has a nonsingular solution in 
$\grn_\grp^{ns}$ for a fixed
prime $\grp$. Suppose that $\bfy\in (\grn (\calO_F)_\grp)^{ns}$ is such a
nonsingular solution, and assume for simplicity of notation that the leading
minor of the corresponding Jacobian matrix has full rank. Set
\begin{equation*}
\del =\nu_\grp\left(\det \left(\frac{\partial f_i}{\partial x_j}(\bfy+\bfd)\right)_{1\leq i,j\leq  r}\right),
\end{equation*}
where we write $\nu_\grp$ for the $\grp$-adic valuation. Now set $u=
2\del+n_\grp+1$, and choose
$x_{r+1},\ldots,x_{ns}\in\calO_F$ with 
\begin{equation}\label{eqn6.6}
x_i\equiv y_i \bmod\grp^{u}.
\end{equation}
Then we have 
\begin{equation*}
f_i(y_1+d_1,\ldots,y_r+d_r,x_{r+1}+d_{r+1},\ldots,x_{ns}+d_{ns})\equiv 0 \bmod \grp^{u},
\end{equation*}
for $1\leq i\leq r$. From a slightly modified version of
\cite[Prop. 5.20]{Greenberg}, a form of Hensel's lemma, we obtain $x_1,\ldots,x_r$ with 
\begin{equation*}
f_i(\bfx+\bfd)\equiv 0 \bmod\grp^{l},
\end{equation*}
and $x_j\equiv y_j\bmod\grp^{\del+n_\grp+1}$ for $1\leq j\leq r$. If we
restrict ourselves in equation (\ref{eqn6.6}) to a complete set of residues
modulo $\grp^{l+n_\grp}$ for each $x_i$, then there are $\Nm
(\grp)^{(l-2\del-1)(ns-r)}$ choices. This shows that 
\begin{equation*}
D(\grp,l)\geq \Nm(\grp)^{ (l-2\del-1)(ns-r)},
\end{equation*}
for $l$ large enough, which proves the lemma.
\end{proof}

\subsection{Proofs of Theorems \ref{thm1.1} and \ref{thm1.2}}

\begin{proof}[Proof of Theorem \ref{thm1.2}]
We note that
\begin{equation*}
N(\calB,P)=\int_{\grM(\tet)}S(\bfalp)\d\bfalp+\int_{\grm(\tet)}
S(\bfalp)\d\bfalp.
\end{equation*}
Therefore, this theorem is a consequence of Lemma \ref{lem4.2} for the minor arc part, and
Lemma \ref{lem4.3} together with Lemmas \ref{lem5.2}, \ref{lem6.1} and \ref{lem6.2}
insofar as the main term is concerned. In particular, we get 
\begin{equation}\label{eqn7.1}
\mu(\calB)=\psi(0)\prod_\grp \sig_\grp,
\end{equation}
where the product is again taken over all primes of $k$.
\end{proof}

Now we deduce Theorem \ref{thm1.1} from Theorem \ref{thm1.2} using an argument
as in Skinner's paper (see the proof of Cor. 1 in 
Section 5 of \cite{Ski1997}).

\begin{proof}[Proof of Theorem \ref{thm1.1}]
Assume that we are given some $\eps>0$ and a finite set of places $S$ of $F$,
which we can assume to contain all infinite places. Furthermore assume that we
are given solutions 
\begin{equation*}
(\bfxnu_1,\ldots,\bfxnu_{2r})\in Y_{\rm sm} (F_\nu),
\end{equation*}
for all $\nu\in S$. We want to find $(\bfx_1,\ldots,\bfx_{2r})\in Y_{\rm sm}
(F)$ such that
\begin{equation*}
|x_j-\xnu_j|_\nu<\eps,
\end{equation*}
for all $1\leq j\leq 2nr$ and all $\nu\in S$.\par
First we write
\begin{equation*}
\xnu_{n(j-1)+1}\xi_1+\ldots+\xnu_{nj}\xi_n=\frac{\ynu_{n(j-1)+1}\xi_1+\ldots+\ynu_{nj}\xi_n}{\ynu_{2nr+1}\xi_1+\ldots+\ynu_{ns}\xi_n},
\end{equation*}
with $\ord_\nu (\ynu_j)\geq 0$ for all finite $\nu\in S$ and $1\leq
j\leq ns$. Using the Chinese
remainder theorem we can find $\bfd\in \calO_F^{ns}$ with $|d_j-\ynu_j|_\nu<\tilde{\eps}$
for all $j$ and all finite places $\nu\in S_f$. As in \cite{Ski1997}, 
if $\grp$ is the prime corresponding to a finite place $\nu$, we write
\begin{equation*}
n_\grp = \min_j\ord_\nu(d_j-\ynu_j), \quad\quad \grn=\prod_{\grp\in S_f} \grp^{n_\grp}.
\end{equation*}
\par
We turn to infinite places, and note that there is a unique $\bfu\in V^{ns}$ such that
$\tau_\nu(\bfu)= \bfynu$
for all infinite places $\nu$. Here we write $\tau_\nu$ for the embedding
corresponding to the infinite place $\nu$. Then we have
\begin{equation*}
\sum_{j=1}^{2r} a_{ij} N(\bfu_j)+a_{i,2r+1} N(\bfu_{2r+1})=0,
\end{equation*}
for $1\leq i\leq r$. We note that there is $z\in \calO_F$ such that
$z\xi_i$ is integral for all $i$. Multiplying the above equation by a
power of $z$ we can assume $\xi_1,\ldots,\xi_n$ to be integral. Next,
set $b_{ij}=\tilde{a}a_{ij}$ for $j\leq s$ so that we have
$b_{ij}\in\calO_F$ for all $i$ and $j$. By construction, $\bfu$ is then a
nonsingular solution of the system of equations $f_i(\bfx)=0$. Choosing
$\calB$ sufficiently small around $\bfu$, we can assume that for any
$\bfx\in \calB$ we have
\begin{equation*}
{\rm rk}\left( \frac{\partial f_i}{\partial x_j}(\bfx)\right) =r.
\end{equation*}
Now we can apply Theorem
\ref{thm1.2}, and get
\begin{equation*}
N(\calB,P)=\mu(\calB)P^{mn(r+1)}+o(P^{mn(r+1)}),
\end{equation*}
with some positive constant $\mu(\calB)$, since $Y_{\rm sm}(F_\nu)\neq
\emptyset$ for all places $\nu$. Let $P$ and $t$ be large integers with 
$P\equiv 1\bmod\grn^t$. For $P$ sufficiently large we then get a solution $\bfz\neq 0$ to the system of equations
$f_i(\bfz)=0$ with $\bfz-\bfd \in (P\calB)\cap\grn^{ns}$. We define $\bfx$ by
\begin{equation*}
x_{n(j-1)+1}\xi_1+\ldots+x_{nj}\xi_n=\frac{z_{n(j-1)+1}\xi_1+\ldots+z_{nj}\xi_n}{z_{2nr+1}\xi_1+\ldots+z_{ns}\xi_n},
\end{equation*}
for $1\leq j\leq 2r$. Then the $x_i$ are rational functions in $\bfz$ and hence they are
continuous in $\bfz$. For an infinite place $\nu$ we estimate
\begin{align*}
\max_j|\frac{1}{P}z_j-\ynu_j|_\nu\ll \frac{1}{P} +\kappa.
\end{align*}
For finite places $\nu\in S$ we have
$$
\max_j|\frac{1}{P} z_j-\ynu_j|_\nu\ll \max_{j} (|\frac{1}{P}z_j-z_j|_\nu+
|z_j-d_j|_\nu+|d_j-\ynu_j|_\nu)\ll \tilde{\eps},$$
for some $P\equiv 1 \bmod \grn^t$ with $t$ sufficiently large. By choosing
$\kap$ and $\tilde{\eps}$ sufficiently small and $P$ sufficiently 
large we finally obtain
\begin{equation*}
|x_j-\xnu_j|_\nu<\eps,
\end{equation*}
for all $1\leq j\leq 2nr$ and all $\nu\in S$ as required. 
\end{proof}

\bibliographystyle{amsbracket}
\providecommand{\bysame}{\leavevmode\hbox to3em{\hrulefill}\thinspace}

\end{document}